\theoremstyle{plain}  
\newtheorem{theorem}{Theorem}[section]
\newtheorem{proposition}[theorem]{Proposition}
\newtheorem{lemma}[theorem]{Lemma}   
\newtheorem{corollary}[theorem]{Corollary}
\providecommand{\U}[1]{\protect\rule{.1in}{.1in}}
\providecommand{\U}[1]{\protect\rule{.1in}{.1in}}
\providecommand{\U}[1]{\protect\rule{.1in}{.1in}}
\providecommand{\U}[1]{\protect\rule{.1in}{.1in}}
\providecommand{\U}[1]{\protect\rule{.1in}{.1in}}
\newtheorem{remark}[theorem]{Remark}
\theoremstyle{definition}
\theoremstyle{remark}
\DeclareMathOperator{\Ker}{Ker}
\newcommand{\QED}{\ifhmode\unskip\nobreak\fi\quad {\rm Q.E.D.}} 
\newcommand{\R}{\mathbb{R}}
\newcommand{\be}{\begin{equation}}	
\newcommand{\ee}{\end{equation}}	
\newcommand{\bern}{\begin{eqnarray}}	
\newcommand{\eern}{\end{eqnarray}}	
\newcommand{\norm}[1]{\left\|#1\right\|}		
\begin{document}

\title{\sc Concentration of solutions for a \\singularly perturbed Neumann problem \\in non smooth domains}

\date{}

\maketitle

\begin{center}
\author{Serena Dipierro}
\end{center}

\begin{center}
{\small SISSA, Sector of Mathematical Analysis\\
Via Bonomea 265, 34136 Trieste, Italy\\
E-mail address: dipierro@sissa.it
}
\end{center}

\begin{abstract}
We consider the equation  $-\epsilon^ {2}\Delta u + u = u^ {p}$ 
in a bounded domain $\Omega\subset\R^{3}$ with edges. 
We impose Neumann boundary conditions, 
assuming $1<p<5$, and prove 
concentration of solutions at suitable points of $\partial\Omega$ on the edges.
\end{abstract}

\section{Introduction}

In this paper we study the following singular perturbation problem with Neumann boundary condition 
in a bounded domain $\Omega\subset\R^{3}$ whose boundary $\partial\Omega$ is non smooth:
\bern
\left\{ 
\begin{array}{ll} 
         -\epsilon^ {2}\Delta u + u = u^ {p}   \quad & \mathrm{in\ } \Omega,      \\
          \frac{\partial u}{\partial \nu} = 0 \qquad  & \mathrm{on\ } \partial\Omega .
 \end{array} 
\right.         
 \label{problem}\eern 
Here $p\in\left(1, 5\right)$ is subcritical and $\nu$ denotes the outer unit normal at $\partial\Omega$.

\medskip

Problem $(\ref{problem})$ or some of its variants 
arise in several physical and biological models. 
Consider, for example, the Nonlinear Schr\"{o}dinger Equation
\be     i\hbar\frac{\partial\psi}{\partial t}   =    - \frac{\hbar^{2}}{2m} \Delta\psi + V\psi -  \gamma\vert\psi\vert^{p-2}\psi,   \label{nse}\ee
where $\hbar$ is the Planck constant, $V$ is the potential, and $\gamma$ and $m$ 
are positive constants. 
Then standing waves of $(\ref{nse})$ can be found 
setting $\psi\left(x, t\right)=e^{-iEt/\hbar}v\left(x\right)$, 
where $E$ is a constant and the real function $v$ satisfies the elliptic equation 
\be     -\hbar^{2}\Delta v  +  \tilde{V} v  =  \vert v\vert^{p-2}v   \nonumber\ee
for some modified potential $\tilde{V}$. 
In particular, when one considers the semiclassical 
limit $\hbar\rightarrow 0$, 
the last equation becomes a singularly perturbed one; 
see for example \cite{AM}, \cite{FW}, 
and references therein.

Concerning reaction-diffusion systems, this phenomenon is 
related to the so-called Turing's instability. 
More precisely, it is known that scalar reaction-diffusion equations 
in a convex domain admit only constant stable steady state solutions; 
see \cite{CH}, \cite{Mat}. 
On the other hand, as noticed in \cite{Tu}, 
reaction-diffusion systems with different 
diffusivities might generate 
non-homogeneous stable steady states. 
A well-known example is the Gierer-Meinhardt system, 
introduced in \cite{GM} to describe some biological experiment.  
We refer to \cite{Ni}, \cite{NTY} for more details.

\bigskip

The study of the concentration phenomena at points 
for smooth domains is very rich and has been intensively developed 
in recent years. 
The search for such condensing solutions 
is essentially carried out by two methods. 
The first approach is variational and uses tools 
of the critical point theory or topological methods. 
A second way is to reduce the problem to a finite-dimensional 
one by means of Lyapunov-Schmidt reduction.

The typical concentration behavior of solution $U_{Q, \epsilon}$ 
to $(\ref{problem})$ is via a scaling of the variables in the form 
\be    U_{Q,\epsilon}\left(x\right)\sim U\left(\frac{x-Q}{\epsilon}\right), \label{appr}\ee
where $Q$ is some point of $\bar{\Omega}$, 
and $U$ is a solution of the problem 
\be       -\Delta U + U = U^ {p}   \quad  \mathrm{in\ } \R^{3}    
               \quad  \mathrm{(or\ in\ } \R^{3}_{+}=\left\lbrace \left(x_{1}, x_{2}, x_{3}\right)\in\R^{3}  :  x_{3}>0\right\rbrace),
\label{prob}\ee   
the domain depending on whether $Q$ lies in the interior of $\Omega$ 
or at the boundary; 
in the latter case Neumann conditions are imposed. 
When $p<5$ (and indeed only if this inequality is satisfied), 
problem $(\ref{prob})$ admits positive radial solutions 
which decay to zero at infinity; see \cite{BL}, \cite{St}. 
Solutions of $(\ref{problem})$ with this profile 
are called \textit{spike-layers}, 
since they are highly concentrated near some point of $\bar{\Omega}$. 

Let us recall some known results. 
\textit{Boundary-spike layers} are solutions of $(\ref{problem})$ 
with a concentration at one or more points of 
the boundary $\partial\Omega$ as $\epsilon\rightarrow 0$. 
They are peaked near critical point of the mean curvature. 
It was shown in \cite{NT1}, \cite{NT2} that 
mountain-pass solutions of $(\ref{problem})$ 
concentrate at $\partial\Omega$ near 
global maxima of the mean curvature.  
One can see this fact considering the variational structure of $(\ref{problem})$. 
In fact, its solutions can be found as critical points 
of the following Euler-Lagrange functional 
\be
 \tilde{I}_{\epsilon}\left(u\right)  = \frac{1}{2} \int_{\Omega}\left(\epsilon^ {2}\vert\nabla u\vert^ {2} + u^ {2}\right)dx - \frac{1}{p+1}\int_{\Omega}\vert u\vert^ {p+1}dx, \quad  u\in W^{1,2}\left(\Omega\right).
\nonumber\ee
Plugging into $\tilde{I}_{\epsilon}$ a function of the form 
$(\ref{appr})$ with $Q\in\partial\Omega$ 
one sees that 
\be    \tilde{I}_{\epsilon}\left(U_{Q, \epsilon}\right)=C_{0}\epsilon^{3}-C_{1}\epsilon^{4}H\left(Q\right)+o\left(\epsilon^{4}\right), \label{mean}\ee 
where $C_{0}, C_{1}$ are positive constants depending only 
on the dimension and $p$, and $H$ is the mean curvature; 
see for instance \cite{AM}, Lemma $9.7$. 
To obtain this expansion one can use the radial symmetry of $U$ 
and parametrize $\partial\Omega$ as a normal graph near $Q$. 
From the above formula one can see that the bigger 
is the mean curvature the lower is the energy of this function: 
roughly speaking, boundary spike layers would tend to move 
along the gradient of $H$ in order to minimize their energy. 
Moreover one can say that the energy of spike-layers is of order $\epsilon^{3}$, 
which is proportional to the volume of their \textit{support}, 
heuristically identified with a ball of radius 
$\epsilon$ centered at the peak. 
There is an extensive literature regarding the search of 
more general solutions of $(\ref{problem})$ 
concentrating at critical points of $H$; 
see \cite{DFW}, \cite{Gr}, \cite{GPW}, \cite{Gu}, \cite{Li}, \cite{LNT}, \cite{NPT}, \cite{We}. 
 
There are other types of solutions of $(\ref{problem})$ 
with interior and/or boundary peaks, 
possible multiple, 
which are constructed by using gluing techniques or 
topological methods; 
see \cite{DW}, \cite{DY}, \cite{GW}, \cite{GW1}, \cite{GWW}, \cite{Wa}. 
For \textit{interior spike} solutions 
the distance function $d$ from the boundary $\partial\Omega$ 
plays a role similar to that of the mean curvature $H$. 
In fact, solutions with interior peaks, 
as for the problem with the Dirichlet boundary condition, 
concentrate at critical points of $d$, 
in a generalized sense; 
see \cite{LN}, \cite{NW}, \cite{We1}. 

Concerning a singularly perturbed problem with mixed Dirichlet 
and Neumann boundary conditions, in \cite{GMMP1}, \cite{GMMP2} 
it was proved that, under suitable geometric conditions on 
the boundary of a smooth domain, 
there exist solutions which approach the intersection 
of the Neumann and the Dirichlet parts as the singular perturbation 
parameter tends to zero.

There is an extensive literature regarding this type of problems, 
but only the case $\Omega$ smooth was considered. 
Concerning the case $\Omega$ non smooth, 
at our knowledge there is only a bifurcation result for the equation 
\bern
\left\{ 
\begin{array}{ll} 
         \Delta u + \lambda f\left(u\right)  = 0   \quad & \mathrm{in\ } \Omega,     \\
          \frac{\partial u}{\partial \nu} = 0 \qquad  & \mathrm{on\ } \partial\Omega,
 \end{array} 
\right.         
 \nonumber\eern
obtained by Shi in \cite{Sh} 
when $\Omega$ is a rectangle $\left(0, a\right)\times\left(0, b\right)$ in $\R^{2}$.

\medskip

In this paper we consider the problem $(\ref{problem})$, 
where $\Omega$ is a bounded domain in $\R^{3}$ 
whose boundary $\partial\Omega$ has smooth edges. 
If we denote by $\Gamma$ an edge of $\partial\Omega$, 
we can consider the function $\alpha:\Gamma\rightarrow\R$ 
which associates to every $Q\in\Gamma$ the opening angle at $Q$, $\alpha\left(Q\right)$. 
As in the previous case, we can expect that the function $\alpha$ 
plays the same role as the mean curvature $H$ for a smooth domain. 
In fact, plugging into $\tilde{I}_{\epsilon}$ a function of the form 
$(\ref{appr})$ with $Q\in\Gamma$ 
one obtains an expression similar to $(\ref{mean})$, 
with $C_{0}\alpha\left(Q\right)$ instead of $C_{0}$; 
see Lemma \ref{lem:espansione}. 
Roughly speaking, we can say that the energy of 
solutions is of order $\epsilon^{3}$, 
which is proportional to the volume of their \textit{support}, 
heuristically identified with a ball of radius $\epsilon$ 
centered at the peak $Q\in\Gamma$; 
then, when we intersect this ball 
with the domain we obtain the dependence on the angle $\alpha\left(Q\right)$.

The main result of this paper is the following
\begin{theorem} 
Let $\Omega\subset\R^{3}$  be a piecewise smooth bounded domain 
whose boundary $\partial\Omega$ has a finite number of smooth edges, and $1<p<5$. 
Fix an edge $\Gamma$, and suppose $Q\in\Gamma$ 
is a local strict maximum or minimum of the function $\alpha$, with $\alpha\left(Q\right)\neq\pi$. 
Then for $\epsilon >0$ sufficiently small problem 
$(\ref{problem})$ admits a solution concentrating at $Q$.
\label{th:solution}
\end{theorem}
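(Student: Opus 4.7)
The plan is to perform a finite-dimensional Lyapunov--Schmidt reduction parametrised by the location of the peak on the edge $\Gamma$. Let $U\in W^{1,2}(\R^{3})$ be the unique positive radial ground state of $-\Delta U+U=U^{p}$, which decays exponentially at infinity. For each $Q$ in a small one-dimensional neighbourhood $\mathcal{N}\subset\Gamma$ of $Q_{0}$ I would construct an approximate solution $U_{Q,\epsilon}$ of \eqref{problem} by translating $U\bigl((\,\cdot-Q)/\epsilon\bigr)$ and adding a higher-order boundary correction so that $\partial U_{Q,\epsilon}/\partial\nu=0$ on $\partial\Omega$. The key geometric point is that the tangent cone to $\bar{\Omega}$ at $Q\in\Gamma$ is a dihedral wedge of opening $\alpha(Q)$ whose two planar faces meet along the tangent line to $\Gamma$, so the radial function $U$ automatically satisfies Neumann conditions on those faces; the correction term is of order $\epsilon$, controlled by the principal curvatures of the smooth pieces of $\partial\Omega$.

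Next I would linearise the Euler--Lagrange equation of $\tilde I_{\epsilon}$ at $U_{Q,\epsilon}$ and study the operator $L_{Q,\epsilon}:=-\epsilon^{2}\Delta+1-p\,U_{Q,\epsilon}^{\,p-1}$ acting on $W^{1,2}(\Omega)$ with Neumann conditions. In rescaled variables $L_{Q,\epsilon}$ converges, as $\epsilon\to0$, to the linearisation $-\Delta+1-p\,U^{p-1}$ on the wedge of opening $\alpha(Q)$. In the smooth half-space case the kernel is spanned by tangential translation modes $\partial_{i}U$, but on a wedge with $\alpha(Q)\neq\pi$ the only translation preserving both faces is along the edge direction, so the limit kernel is one-dimensional and spanned by $\partial_{\tau}U$, where $\tau$ is the tangent to $\Gamma$ at $Q$. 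By non-degeneracy/coercivity arguments, $L_{Q,\epsilon}$ is then uniformly invertible on the orthogonal complement of an approximate kernel $\R\cdot\partial_{\tau}U_{Q,\epsilon}$, and the standard fixed-point scheme yields a correction $w_{\epsilon}(Q)$ with $\|w_{\epsilon}(Q)\|_{W^{1,2}}=o(\epsilon^{3/2})$ such that $u_{Q,\epsilon}:=U_{Q,\epsilon}+w_{\epsilon}(Q)$ solves \eqref{problem} modulo a multiple of $\partial_{\tau}U_{Q,\epsilon}$.

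It then suffices, for $\epsilon$ small, to find an interior critical point of the reduced functional
\begin{equation*}
\Phi_{\epsilon}(Q):=\tilde I_{\epsilon}\bigl(u_{Q,\epsilon}\bigr),\qquad Q\in\mathcal{N}.
\end{equation*}
Using Lemma~\ref{lem:espansione} together with the estimate on $w_{\epsilon}$, the reduced functional expands as
\begin{equation*}
\Phi_{\epsilon}(Q)=C_{0}\,\alpha(Q)\,\epsilon^{3}+o(\epsilon^{3}),
\end{equation*}
with $C_{0}>0$ depending only on $p$, uniformly for $Q\in\mathcal{N}$. Since $Q_{0}$ is a strict local extremum of $\alpha$ with $\alpha(Q_{0})\neq\pi$, I can shrink $\mathcal{N}$ so that $\alpha(Q_{0})$ is strictly separated from the corresponding extremum of $\alpha|_{\partial\mathcal{N}}$; then for $\epsilon$ small enough $\Phi_{\epsilon}$ attains a local maximum (respectively minimum) at an interior point $Q_{\epsilon}\in\mathcal{N}$. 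Standard Lyapunov--Schmidt arguments show that any interior critical point of $\Phi_{\epsilon}$ produces a genuine critical point of $\tilde I_{\epsilon}$, so $u_{Q_{\epsilon},\epsilon}$ solves \eqref{problem}. By the exponential decay of $U$ and the smallness of $w_{\epsilon}$, this solution concentrates at $Q_{\epsilon}$, and $Q_{\epsilon}\to Q_{0}$ as $\epsilon\to0$.

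The main obstacle I expect is the spectral analysis of the linearisation on the singular domain: one must show that on the exact wedge the limit operator has kernel exactly $\R\cdot\partial_{\tau}U$, and that the associated coercivity and elliptic/trace estimates for $L_{Q,\epsilon}$ on the orthogonal complement are \emph{uniform} in $\epsilon$ and in $Q$ despite the presence of the edge. Once this non-degeneracy result is established, the construction of $w_{\epsilon}$, the expansion of $\Phi_{\epsilon}$, and the final extraction of $Q_{\epsilon}$ follow by routine perturbative arguments.
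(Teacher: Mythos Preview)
Your proposal is correct and follows essentially the same Lyapunov--Schmidt strategy as the paper: non-degeneracy of the linearisation on the wedge (kernel exactly $\R\cdot\partial_{\tau}U$ when $\alpha(Q)\neq\pi$), uniform invertibility on the orthogonal complement, construction of $w_{\epsilon}$ by a fixed-point argument, and the expansion $\Phi_{\epsilon}(Q)=C_{0}\alpha(Q)\epsilon^{3}+o(\epsilon^{3})$ leading to a critical point near a strict extremum of $\alpha$. The only cosmetic differences are that the paper works in the rescaled domain $\Omega_{\epsilon}=\epsilon^{-1}\Omega$ and uses a simple cut-off rather than an exact Neumann boundary correction for the approximate solution.
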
 

\begin{remark} 
The condition that $Q$ is a local strict maximum or minimum of $\alpha$ 
can be replaced by the fact that there exists an open set $V$ of $\Gamma$ 
containing $Q$ such that $\alpha\left(Q\right)>\sup_{\partial V}\alpha$ 
or $\alpha\left(Q\right)<\inf_{\partial V}\alpha$.  
\end{remark}

\begin{remark} 
The condition $\alpha\left(Q\right)\neq\pi$ is natural 
since it is needed to ensure that $\partial\Omega$ 
is not flat at $Q$.
\end{remark}

\begin{remark} 
We expect a similar result to hold in higher dimension, 
with substantially the same proof. 
For simplicity we only treat the $3$-dimensional case.   
\end{remark}

\noindent
The general strategy for proving Theorem $\ref{th:solution}$ 
relies on a finite-dimensional reduction; 
see for example the book \cite{AM}.

By the change of variables $x\mapsto\epsilon x$, problem $(\ref{problem})$ can be transformed into
\bern
\left\{ 
\begin{array}{ll} 
         -\Delta u + u = u^ {p}   \quad & \mathrm{in\ } \Omega_{\epsilon},     \\
          \frac{\partial u}{\partial \nu} = 0 \qquad  & \mathrm{on\ } \partial\Omega_{\epsilon},
 \end{array} 
\right.         
 \label{problem1}\eern 
where $\Omega_{\epsilon}:=\frac{1}{\epsilon}\Omega$. 
Solutions of  $(\ref{problem1})$ can be found as critical points  
of the Euler-Lagrange functional
\be
 I_{\epsilon}\left(u\right)  = \frac{1}{2} \int_{\Omega_{\epsilon}}\left(\vert\nabla u\vert^ {2} + u^ {2}\right)dx - \frac{1}{p+1}\int_{\Omega_{\epsilon}}\vert u\vert^ {p+1}dx, \quad  u\in W^{1,2}\left(\Omega_{\epsilon}\right).
  \label{var1}
\ee
Now, first of all, one finds a manifold $Z_{\epsilon}$ 
of approximate solutions to the given problem, 
which are of the form $U_{Q,\epsilon}\left(x\right)=\varphi_{\mu}\left(\epsilon x\right)U\left(x-Q\right)$, 
where $\varphi_{\mu}$ is a suitable cut-off function 
defined in a neighborhood of $Q\in\Gamma$; 
see the beginning of Section $4$, Lemma $\ref{lem:pseudo}$.

To apply the  
method described in Subsection $2.1$
one needs the condition that the 
critical manifold $Z_{\epsilon}$ is non-degenerate, 
in the sense that it satisfies property $ii)$ in Subsection $2.1$. 
The result of non-degeneracy in $\Omega_{\epsilon}$, 
obtained in Lemma $\ref{lem:nondeg}$, 
follows from the non-degeneracy of 
a manifold $Z$ of critical points of the unperturbed problem 
in $K=\tilde{K}\times\R\subset\R^{3}$, 
where $\tilde{K}\subset\R^{2}$ is a cone of opening angle $\alpha\left(Q\right)$. 
In fact, one sees that $\Omega_{\epsilon}$ tends to $K$ 
as $\epsilon\rightarrow 0$. 
To show the non-degeneracy of the unperturbed manifold $Z$ 
we follow the line of Lemma $4.1$ in the book \cite{AM} or 
Lemma $3.1$ in \cite{Ma}. 
We prove that $\lambda=0$ is a simple eigenvalue 
of the linearized of the unperturbed problem at $U\in Z$; 
see Lemma $\ref{lem:cono}$. 
Moreover, if $\alpha\left(Q\right)<\pi$, 
it has only one negative simple eigenvalue; 
whereas, if $\alpha\left(Q\right)>\pi$, 
it has two negative simple eigenvalues; 
see Corollary $\ref{cor:nondeg}$. 
We note that in the case $\alpha\left(Q\right)=\pi$, 
that is when $\partial\Omega$ is flat at $Q$, 
$\lambda=0$ is an eigenvalue of 
multiplicity $2$. 
The proof relies on Fourier analysis, 
but in this case one needs spherical functions 
defined on a portion of the sphere instead of the whole $S^{2}$. 

Then one solves 
the equation up to a vector parallel to the tangent plane 
of the manifold $Z_{\epsilon}$, 
and generates a new manifold $\tilde{Z}_{\epsilon}$ 
close to $Z_{\epsilon}$ which represents a natural constraint 
for the Euler functional $(\ref{var1})$; 
see the proof of Proposition $\ref{prop:ridotto}$. 
By \textit{natural constraint} we mean a set 
for which constrained critical points of $I_{\epsilon}$ 
are true critical points. 

We can finally apply the above mentioned perturbation method 
to reduce the problem to a finite dimensional one, 
and study the functional constrained on $\tilde{Z}_{\epsilon}$. 
Lemma $\ref{lem:espansione}$ provides an expansion of the 
energy of the approximate solution peaked at $Q$ and 
allows us to see that the dominant term in the expression 
of the reduced functional at $Q$ is $\alpha\left(Q\right)$. 
This implies Theorem $\ref{th:solution}$.

\bigskip

The paper is organized in the following way. 
In Section $2$ we collect preliminary material: 
we recall the abstract variational perturbative scheme and 
obtain some useful geometric results. 
In Section $3$ we prove the non-degeneracy 
of the critical manifold for the unperturbed 
problem in the cone $K$. 
In Section $4$ we construct the manifold of approximate solutions, 
showing that it is a non-degenerate pseudo-critical manifold, 
expand the functional on the natural constraint 
and deduce Theorem $\ref{th:solution}$.

\medskip 

\subsection*{Notation} 
Generic fixed constant will be denoted by $C$, 
and will be allowed to vary within a single line or formula. 
The symbols $o_{\epsilon}\left(1\right)$, 
$o_{R}\left(1\right)$ $o_{\epsilon, R}\left(1\right)$ will denote respectively 
a function depending on $\epsilon$ that tends to $0$ as $\epsilon\rightarrow 0$, 
a function depending on $R$ that tends to $0$ as $R\rightarrow +\infty$ 
and a function depending on both $\epsilon$ and $R$ that tends to $0$ 
as $\epsilon\rightarrow 0$ and $R\rightarrow +\infty$. 
We will work in the space $W^{1,2}\left(\Omega_{\epsilon}\right)$, 
endowed with the norm $\norm{u}^{2} = \int_{\Omega_{\epsilon}}\left(\vert\nabla u\vert^ {2} + u^ {2}\right)dx$, 
which we denote simply by $\norm{u}$, 
without any subscript.

\section{Some preliminaries}

In this section we introduce the abstract perturbation method 
which takes advantage of the variational structure of the problem, 
and allows us to reduce it to a finite dimensional one. 
We refer the reader mainly to \cite{AM}, \cite{Ma} 
and the bibliography therein. 

In the second part we make some computations 
concerning the parametrization of $\partial\Omega$ 
and $\partial\Omega_{\epsilon}$, 
and in particular of the edge.

\subsection{Perturbation in critical point theory} 
In this subsection we recall some results 
about the existence of critical points 
for a class of functionals which are perturbative in nature. 
Given an Hilbert space $H$, 
which might depend on the perturbation parameter $\epsilon$, 
let $I_{\epsilon} : H\rightarrow\R$ be a functional of class $C^{2}$ 
which satisfies the following properties
\begin{itemize}
\item[i)]  there exists a smooth finite-dimensional manifold, 
compact or not, $Z_{\epsilon}\subseteq H$ 
such that $\norm{I'_{\epsilon}(z)}\leq C\epsilon$ for every $z\in Z_{\epsilon}$ and for some fixed constant $C$, independent of $z$ and $\epsilon$; 
moreover $\norm{I''_{\epsilon}(z)\left[ q\right] }\leq C\epsilon\norm{q}$ 
for every $z\in Z_{\epsilon}$ and every $q\in T_{z}Z_{\epsilon}$;
\item[ii)] letting $P_{z} : H\rightarrow \left(T_{z}Z_{\epsilon}\right)^{\perp}$, for every $z\in Z_{\epsilon}$, 
be the projection onto the orthogonal complement of $T_{z}Z_{\epsilon}$, 
there exists $C>0$, independent of $z$ and $\epsilon$, such that $P_{z}I''_{\epsilon}(z)$, 
restricted to $\left(T_{z}Z_{\epsilon}\right)^{\perp}$, is invertible from $\left(T_{z}Z_{\epsilon}\right)^{\perp}$ 
into itself, and the inverse operator satisfies $\norm{\left( P_{z}I''_{\epsilon}(z)\right)^{-1}}\leq C$.
\end{itemize}   
\noindent We assume that $Z_{\epsilon}$ has a local $C^{2}$ 
parametric representation $z=z_{\xi}$, $\xi\in\R^{d}$. 
If we set $W=\left(T_{z}Z_{\epsilon}\right)^{\perp}$, 
we look for critical points of  $I_{\epsilon}$ in the form $u=z+w$ 
with $z\in Z_{\epsilon}$ and $w\in W$. 
If $P_{z} : H\rightarrow W$ is as in $ii)$, 
the equation $I'_{\epsilon}\left(z+w\right) = 0$ 
is equivalent to the following system
\bern
\left\{ 
\begin{array}{ll} 
          P_{z}I'_{\epsilon}\left(z+w\right) = 0  \qquad & \mathrm{\left(\textit{the\ auxiliary\ equation}\right),}      \\
           \left(Id-P_{z}\right) I'_{\epsilon}\left(z+w\right) = 0  \quad & \mathrm{\left(\textit{the\ bifurcation\ equation}\right).}
 \end{array} 
\right.         
 \label{aux_bif}\eern 
\begin{proposition} 
\textsl{(See Proposition $2.2$ in \cite{Ma})} 
Let $i),ii)$ hold. 
Then there exists $\epsilon_{0}>0$ with the following property: 
for all $\vert\epsilon\vert<\epsilon_{0}$ and for all $z\in Z_{\epsilon}$, 
the auxiliary equation in $(\ref{aux_bif})$ has a unique solution $w=w_{\epsilon}(z)$ such that:
\begin{itemize}
\item[j)] $w_{\epsilon}(z)\in W$ is of class $C^{1}$ with respect to $z\in Z_{\epsilon}$ 
and  $w_{\epsilon}(z)\rightarrow 0$ as $\vert\epsilon\vert\rightarrow 0$, 
uniformly with respect to $z\in Z_{\epsilon}$, 
together with its derivative with respect to $z$,  $w'_{\epsilon}$;
\item[jj)] more precisely one has that $\norm{w_{\epsilon}(z)}= O\left(\epsilon\right)$ as $\epsilon\rightarrow 0$, for all $z\in Z_{\epsilon}$.
\end{itemize}   
\label{pr:fi}\end{proposition}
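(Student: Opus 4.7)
The plan is to rewrite the auxiliary equation as a fixed-point problem on $W := (T_z Z_\epsilon)^\perp$ and apply a contraction mapping argument, in the style of the standard Lyapunov--Schmidt reduction. For a fixed $z \in Z_\epsilon$, Taylor expanding $I'_\epsilon$ around $z$ gives
\[
I'_\epsilon(z+w) = I'_\epsilon(z) + I''_\epsilon(z)[w] + N_z(w),
\]
where $N_z(w) := I'_\epsilon(z+w) - I'_\epsilon(z) - I''_\epsilon(z)[w]$ is the nonlinear remainder. By the $C^2$ regularity of $I_\epsilon$ together with standard local Lipschitz estimates on $I''_\epsilon$ (which hold in the applications of interest, cf.\ \cite{AM}), one has $\norm{N_z(w)} \leq C\norm{w}^{2}$ and $\norm{N_z(w_1) - N_z(w_2)} \leq C(\norm{w_1}+\norm{w_2})\norm{w_1 - w_2}$ uniformly in $z$. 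Applying $P_z$ and using hypothesis (ii) to invert $L_z := P_z I''_\epsilon(z)|_W$ with $\norm{L_z^{-1}} \leq C$, the auxiliary equation reduces to $w = F_z(w)$, where
\[
F_z(w) := -L_z^{-1}\bigl(P_z I'_\epsilon(z) + P_z N_z(w)\bigr).
\]

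Next I would show that $F_z$ is a contraction on the closed ball $B_\rho \subset W$ of radius $\rho := C_1\epsilon$, for a suitably large $C_1$ and all sufficiently small $\epsilon$. By (i), $\norm{L_z^{-1} P_z I'_\epsilon(z)} \leq C^{2}\epsilon$, while the nonlinear contribution satisfies $\norm{L_z^{-1} P_z N_z(w)} \leq C'\rho^{2} = O(\epsilon^{2})$, so $F_z$ sends $B_\rho$ into itself once $C_1$ is chosen large enough and $\epsilon$ small. The Lipschitz estimate on $N_z$ yields
\[
\norm{F_z(w_1)-F_z(w_2)} \leq C''(\norm{w_1}+\norm{w_2})\norm{w_1-w_2} \leq C''' \epsilon \norm{w_1-w_2},
\]
which is strictly less than $\norm{w_1-w_2}$ for $\epsilon$ small. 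Banach's fixed-point theorem then produces a unique $w_\epsilon(z) \in B_\rho$ solving the auxiliary equation with $\norm{w_\epsilon(z)} = O(\epsilon)$, establishing (jj).

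For the $C^1$ dependence in (j), I would invoke the implicit function theorem applied to $\Phi(\epsilon,z,w) := w - F_z(w)$, viewed as a map between the appropriate Banach bundles over $Z_\epsilon$. At $w = w_\epsilon(z)$ one has $\partial_w \Phi = \mathrm{Id} + L_z^{-1} P_z \, DN_z(w_\epsilon(z))$, and the second term is $O(\epsilon)$ in operator norm; hence $\partial_w \Phi$ is invertible for small $\epsilon$. The joint smoothness of $z \mapsto (P_z, L_z^{-1})$, inherited from the $C^2$-regularity of $Z_\epsilon$ and the uniform bound in (ii), then gives that $w_\epsilon(z)$ is $C^1$ in $z$ and that both $w_\epsilon(z)$ and $w'_\epsilon(z)$ tend to $0$ as $\epsilon \to 0$, uniformly in $z$.

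The main technical obstacle is ensuring that all estimates are \emph{uniform} in $z \in Z_\epsilon$: since $W = (T_z Z_\epsilon)^\perp$ genuinely depends on $z$, the contraction argument is really being run in a family of Banach spaces parametrised by $Z_\epsilon$, which need not be compact. Uniformity is however built into the hypotheses, which provide a single constant $C$ independent of $z$, and the $C^1$-smoothness of the orthogonal projection $P_z$ along $Z_\epsilon$ follows from the $C^2$ parametric assumption. With those ingredients in place, the proof reduces to the routine Lyapunov--Schmidt reduction carried out in Proposition $2.2$ of \cite{Ma}, to which the statement explicitly refers.
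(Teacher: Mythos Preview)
Your proposal is correct and follows the standard Lyapunov--Schmidt contraction argument. The paper does not actually prove this abstract proposition; it merely cites it from \cite{Ma}. However, the paper does carry out precisely your argument in the concrete setting (Proposition~\ref{prop:w}): the same Taylor expansion, the same reformulation $w = N_{Q,\epsilon}(w)$ via $L_Q^{-1}$, the same contraction on a ball of radius $\bar{C}\epsilon$, and the same appeal to the Implicit Function Theorem for the $C^1$ dependence. The only refinement there is that the remainder estimate reads $\norm{R_{Q,\epsilon}(w)} \leq C(\norm{w}^2 + \norm{w}^p)$ rather than purely quadratic, to accommodate $1 < p < 2$; your parenthetical caveat about ``the applications of interest'' covers this.
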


\noindent We shall now solve the bifurcation equation in $(\ref{aux_bif})$. 
In order to do this, let us define the \textit{reduced functional} $\Phi_{\epsilon} : Z_{\epsilon}\rightarrow\R$ by setting 
$\Phi_{\epsilon}(z) = I_{\epsilon}(z+ w_{\epsilon}(z))$.
\begin{theorem}
\textsl{(See Theorem $2.3$ in \cite{Ma})} 
Suppose we are in the situation of Proposition $\ref{pr:fi}$, 
and let us assume that $\Phi_{\epsilon}$ has, for $\vert\epsilon\vert$ sufficiently small, 
a critical point $z_{\epsilon}$. 
Then $u_{\epsilon} = z_{\epsilon} + w(z_{\epsilon})$ is a critical point of $I_{\epsilon}$.
\label{th:rid}
\end{theorem}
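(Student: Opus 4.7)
I would argue directly from the definitions, as sketched in the paragraph immediately preceding the statement. The key point is that the condition ``$z_\epsilon$ is a critical point of $\Phi_\epsilon$'' translates, via the chain rule, into an orthogonality condition for $I'_\epsilon(u_\epsilon)$ against a small perturbation of the tangent space $T_{z_\epsilon}Z_\epsilon$, while the auxiliary equation in~(\ref{aux_bif}) already forces $I'_\epsilon(u_\epsilon)$ to lie in $T_{z_\epsilon}Z_\epsilon$. For $|\epsilon|$ small these two facts combine into a finite-dimensional linear system on the coefficients of $I'_\epsilon(u_\epsilon)$ in a basis of $T_{z_\epsilon}Z_\epsilon$, whose matrix is a small perturbation of an invertible one.

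\textbf{Execution.} First, differentiating $\Phi_\epsilon(z)=I_\epsilon(z+w_\epsilon(z))$ via the chain rule gives, for every $v\in T_{z_\epsilon}Z_\epsilon$,
\[
0 \;=\; d\Phi_\epsilon(z_\epsilon)[v]
\;=\; \bigl(I'_\epsilon(u_\epsilon)\,\big|\, v+w'_\epsilon(z_\epsilon)[v]\bigr).
\]
Next, the auxiliary equation $P_{z_\epsilon}I'_\epsilon(u_\epsilon)=0$ means exactly that $I'_\epsilon(u_\epsilon)\in T_{z_\epsilon}Z_\epsilon$. Fix a basis $e_1,\dots,e_d$ of $T_{z_\epsilon}Z_\epsilon$ and expand $I'_\epsilon(u_\epsilon)=\sum_{i=1}^{d} c_i\, e_i$. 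Testing the previous identity against $v=e_j$ for $j=1,\dots,d$ produces the finite-dimensional linear system
\[
\sum_{i=1}^{d}c_i\bigl[\,(e_i\,|\,e_j)+(e_i\,|\,w'_\epsilon(z_\epsilon)[e_j])\,\bigr]=0,\qquad j=1,\dots,d.
\]

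\textbf{Conclusion and obstacle.} The matrix $G_{ij}:=(e_i\,|\,e_j)$ is the Gram matrix of a basis, hence invertible; the correction $M_{ij}:=(e_i\,|\,w'_\epsilon(z_\epsilon)[e_j])$ tends to $0$ as $\epsilon\to 0$ by property~j) of Proposition~\ref{pr:fi}. Therefore, for $|\epsilon|$ sufficiently small the matrix $G+M$ is invertible, the system forces $c_i=0$ for every $i$, and hence $I'_\epsilon(u_\epsilon)=0$, which is the claim. The only mildly delicate point to keep in mind is that $w'_\epsilon(z)[v]$ need not lie in $(T_zZ_\epsilon)^\perp$, since the orthogonal complement itself depends on $z$; but this is harmless, because we never invoke any orthogonality of $w'_\epsilon$, only its smallness, which is precisely the quantitative content of~j).
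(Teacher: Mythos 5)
Your argument is correct and is essentially the rigorous form of the paper's own treatment: the paper defers to Theorem 2.3 of \cite{Ma} and only sketches the same geometric point, namely that the auxiliary equation places $I'_{\epsilon}(u_{\epsilon})$ in $T_{z_{\epsilon}}Z_{\epsilon}$ while criticality of $\Phi_{\epsilon}$ makes it almost orthogonal to that space, the discrepancy being controlled by the smallness of $w'_{\epsilon}$ from j) of Proposition \ref{pr:fi}. The only cosmetic improvement is to take the basis $e_{1},\dots,e_{d}$ of $T_{z_{\epsilon}}Z_{\epsilon}$ orthonormal, so that $G=Id$ and the invertibility of $G+M$ for small $\vert\epsilon\vert$ is uniform in $z_{\epsilon}$, which is what the statement requires.
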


\noindent The next result is a useful criterion for applying Theorem $\ref{th:rid}$, 
based on expanding $I_{\epsilon}$ on $Z_{\epsilon}$ in powers of $\epsilon$.

\begin{theorem} 
\textsl{(See Theorem $2.4$ in \cite{Ma})} 
Suppose the assumptions of Proposition $\ref{pr:fi}$ hold, 
and that for $\epsilon$ small there is a local parametrization $\xi\in\frac{1}{\epsilon}U\subseteq\R^{d}$ of $Z_{\epsilon}$ 
such that, as $\epsilon\rightarrow 0$, $I_{\epsilon}$ admits the expansion 
$I_{\epsilon}(z_{\xi})=C_{0}+\epsilon G(\epsilon\xi)+o(\epsilon)$, 
for $\xi\in\frac{1}{\epsilon}U$, 
for some function $G : U \rightarrow\R$. 
Then we still have the expansion 
$\Phi_{\epsilon}(z_{\xi})=C_{0}+\epsilon G(\epsilon\xi)+o(\epsilon)$, 
as $\epsilon\rightarrow 0$.
Moreover, if $\bar{\xi}\in U$ is a strict local maximum or minimum of $G$, 
then for $\vert\epsilon\vert$ small the functional $I_{\epsilon}$ has a critical point $u_{\epsilon}$. 
Furthermore, if $\bar{\xi}$ is isolated, we can take $u_{\epsilon}-z_{\bar{\xi}/\epsilon}=o(1/\epsilon)$ 
as $\epsilon\rightarrow 0$.
\label{th:espansione}
\end{theorem}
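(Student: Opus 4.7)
\textbf{Proof proposal for Theorem \ref{th:espansione}.}

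The plan is to split the argument into three stages: first transfer the expansion from $I_\epsilon$ to the reduced functional $\Phi_\epsilon$, then exhibit a critical point of $\Phi_\epsilon$ via a perturbative maximum/minimum argument on $U$, and finally invoke Theorem \ref{th:rid} to pull this back to a critical point of $I_\epsilon$.

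\emph{Stage 1: the expansion of $\Phi_\epsilon$.} By definition $\Phi_\epsilon(z_\xi) = I_\epsilon(z_\xi + w_\epsilon(z_\xi))$, so I would Taylor expand $I_\epsilon$ at $z_\xi$ in the direction $w_\epsilon(z_\xi)$:
\[
\Phi_\epsilon(z_\xi) \;=\; I_\epsilon(z_\xi) \;+\; \bigl\langle I'_\epsilon(z_\xi),\, w_\epsilon(z_\xi) \bigr\rangle \;+\; \tfrac12 \bigl\langle I''_\epsilon(z_\xi + \theta w_\epsilon)\, w_\epsilon,\, w_\epsilon \bigr\rangle,
\]
for some $\theta \in (0,1)$. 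Hypothesis $i)$ of Proposition \ref{pr:fi} gives $\|I'_\epsilon(z_\xi)\| \le C\epsilon$, while $jj)$ gives $\|w_\epsilon(z_\xi)\| = O(\epsilon)$ uniformly in $\xi$. Combined with a uniform bound on $\|I''_\epsilon\|$ in a neighborhood of $Z_\epsilon$ (which is implicit in the perturbative set-up), the two correction terms are $O(\epsilon^2) = o(\epsilon)$. Substituting the assumed expansion $I_\epsilon(z_\xi) = C_0 + \epsilon G(\epsilon\xi) + o(\epsilon)$ yields
\[
\Phi_\epsilon(z_\xi) \;=\; C_0 \;+\; \epsilon\, G(\epsilon\xi) \;+\; o(\epsilon),
\]
uniformly on $\frac{1}{\epsilon}U$.

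\emph{Stage 2: finding a critical point of $\Phi_\epsilon$.} I would rescale to the variable $\eta = \epsilon\xi \in U$ and set $\widetilde{\Phi}_\epsilon(\eta) := \bigl(\Phi_\epsilon(z_{\eta/\epsilon}) - C_0\bigr)/\epsilon$. Stage 1 shows that $\widetilde{\Phi}_\epsilon \to G$ uniformly on compact subsets of $U$ as $\epsilon \to 0$. Assume $\bar\xi$ is a strict local maximum of $G$ (the minimum case is identical). Then there exists $r>0$ with $\overline{B(\bar\xi,r)} \subset U$ and $G(\bar\xi) > \max_{\partial B(\bar\xi,r)} G + 2\delta$ for some $\delta>0$. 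Uniform convergence gives, for $\epsilon$ small enough,
\[
\widetilde{\Phi}_\epsilon(\bar\xi) \;>\; \max_{\eta \in \partial B(\bar\xi,r)} \widetilde{\Phi}_\epsilon(\eta) + \delta,
\]
so $\widetilde{\Phi}_\epsilon$ attains its maximum on $\overline{B(\bar\xi,r)}$ at some interior point $\eta_\epsilon$. This $\eta_\epsilon$ is then a critical point of $\widetilde{\Phi}_\epsilon$, hence $\xi_\epsilon := \eta_\epsilon/\epsilon$ is a critical point of $\Phi_\epsilon$.

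\emph{Stage 3: conclusion and the location estimate.} Applying Theorem \ref{th:rid} to $z_{\xi_\epsilon}$ produces $u_\epsilon = z_{\xi_\epsilon} + w_\epsilon(z_{\xi_\epsilon})$, a genuine critical point of $I_\epsilon$. If $\bar\xi$ is isolated, the above argument can be repeated with $r$ replaced by an arbitrarily small $r_n \downarrow 0$, so any limit point of $\eta_\epsilon$ in $\overline{B(\bar\xi,r)}$ must coincide with $\bar\xi$; hence $\eta_\epsilon \to \bar\xi$, i.e.\ $\xi_\epsilon - \bar\xi/\epsilon = o(1/\epsilon)$, and composition with the smooth parametrization together with $\|w_\epsilon\| = O(\epsilon)$ gives $u_\epsilon - z_{\bar\xi/\epsilon} = o(1/\epsilon)$.

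The only mildly delicate point is Stage 1, where one must be sure that the implicit bound on $\|I''_\epsilon\|$ near $Z_\epsilon$ is available so that the quadratic remainder is genuinely $o(\epsilon)$; everything else is a direct compactness/uniform-convergence argument and a routine application of Theorem \ref{th:rid}.
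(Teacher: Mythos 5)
The paper itself gives no proof of this statement (it is quoted from \cite{Ma}), but your argument is correct and is essentially the standard one used there and mirrored in the paper's own Section 4.1, where $\Psi_{\epsilon}(Q)=I_{\epsilon}(U_{Q,\epsilon})+I'_{\epsilon}(U_{Q,\epsilon})[w]+O(\norm{w}^{2})$ is expanded in exactly the way of your Stage 1. The only points to keep in mind are the ones you already flag: the bound on $I''_{\epsilon}$ near $Z_{\epsilon}$ and the uniformity in $\xi$ of the $o(\epsilon)$ in the hypothesis, both of which are part of how the perturbative scheme is intended to be read, so there is no genuine gap.
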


\begin{remark} 
The last statement asserts that, once we scale back in $\epsilon$, 
the solution concentrates near $\bar{\xi}$.
\end{remark}

\subsection{Geometric preliminaries}

Let us describe $\partial\Omega_{\epsilon}$ near a generic point $Q$ 
on the edge $\Gamma$ of $\partial\Omega_{\epsilon}$. 
Without loss of generality, we can assume that $Q=0\in\R^{3}$, 
that $x_{1}$-axis is the tangent line at $Q$ to $\Gamma$ 
in $\partial\Omega_{\epsilon}$, or $\partial\Omega$. 
In a neighborhood of $Q$, let $\gamma:\left( -\mu_{0}, \mu_{0}\right) \rightarrow\R^{2}$ 
be a local parametrization of $\Gamma$, 
that is $\left(x_{2}, x_{3}\right)  = \gamma\left(x_{1}\right) = \left(\gamma_{1}\left(x_{1}\right), \gamma_{2}\left(x_{1}\right)\right)$. 
Then one has, for $\vert x_{1}\vert <\mu_{0}$, 
\bern 
    \left(x_{2}, x_{3}\right)&=&\gamma\left(x_{1}\right) \nonumber\\
                                        &=&\gamma\left(0\right) +\gamma'\left(0\right)x_{1} + \frac{1}{2}\gamma''\left(0\right)x_{1}^{2} + 
                                         O\left(\vert x_{1}\vert^{3}\right) \nonumber\\
                                        &=&\frac{1}{2}\gamma''\left(0\right)x_{1}^{2} + O\left(\vert x_{1}\vert^{3}\right).
\nonumber\eern
On the other hand, $\Gamma$ is parametrized by 
$\left(x_{2}, x_{3}\right) = \gamma_{\epsilon}\left(x_{1}\right) := \frac{1}{\epsilon}\gamma\left(\epsilon x_{1}\right)$, 
for which the following expansions hold
\bern 
    \gamma_{\epsilon}\left(x_{1}\right) &=& \frac{\epsilon}{2}\gamma''\left(0\right)x_{1}^{2} + O\left(\epsilon^{2}\vert x_{1}\vert^{3}\right), \nonumber\\
    \frac{\partial\gamma_{\epsilon}}{\partial x_{1}} &=& \epsilon\gamma''\left(0\right)x_{1} + O\left(\epsilon^{2}\vert x_{1}\vert^{2}\right).
 \label{exp}\eern

Now we introduce a new set of coordinates on $B_{\frac{\mu_{0}}{\epsilon}}\left(Q\right) \bigcap\Omega_{\epsilon}$: 
\be       y_{1}=x_{1},          \qquad    \left(y_{2}, y_{3}\right) = \left(x_{2}, x_{3}\right) - \gamma_{\epsilon}\left(x_{1}\right). \nonumber\ee
The advantage of these coordinates is that the edge identifies with $y_{1}$-axis, 
but the corresponding metric $g = \left(g_{ij}\right)_{ij}$ will not be flat anymore. 
If $\gamma_{\epsilon}\left(x_{1}\right)=\left(\gamma_{\epsilon 1}\left(x_{1}\right), \gamma_{\epsilon 2}\left(x_{1}\right)\right)$, 
the coefficients of $g$ are given by
\bern \left(g_{ij}\right) = \left(\frac{\partial x}{\partial y_{i}}\cdot\frac{\partial x}{\partial y_{j}}\right) =
\left( 
\begin{array}{ccc} 
    1 +  \frac{\partial\gamma_{\epsilon 1}}{\partial y_{1}}\frac{\partial\gamma_{\epsilon 1}}{\partial y_{1}}  
    + \frac{\partial\gamma_{\epsilon 2}}{\partial y_{1}}\frac{\partial\gamma_{\epsilon 2}}{\partial y_{1}} 
    & \frac{\partial\gamma_{\epsilon 1}}{\partial y_{1}} & \frac{\partial\gamma_{\epsilon 2}}{\partial y_{1}}    \\ 
    \frac{\partial\gamma_{\epsilon 1}}{\partial y_{1}}  & 1 & 0       \\
    \frac{\partial\gamma_{\epsilon 2}}{\partial y_{1}}  & 0 & 1
\end{array} 
\right).
\nonumber\eern
From the estimates in $(\ref{exp})$ it follows that
\be       g_{ij} = Id + \epsilon A + O\left(\epsilon^{2}\vert x_{1}\vert^{2}\right),      \label{gij}   \ee
where
\bern A =
\left( 
\begin{array}{cc} 
    0 & \gamma''\left(0\right)x_{1}    \\ 
    \gamma''\left(0\right)^{T}x_{1}    & 0 
\end{array} 
\right).
\nonumber\eern
It is also easy to check that the inverse matrix $\left(g^{ij}\right)$ 
is of the form $g^{ij} = Id - \epsilon A + O\left(\epsilon^{2}\vert x_{1}\vert^{2}\right)$. 
Furthermore one has $\det g = 1$. 
Therefore, by $(\ref{gij})$, for any smooth function $u$ there holds
\bern 
     \Delta_{g}u = \Delta u -\epsilon \left[2\left(\gamma''\left(0\right)y_{1}\cdot\nabla_{\left( y_{2}, y_{3}\right) }\frac{\partial u}{\partial y_{1}}\right) + \left(\gamma''\left(0\right)\cdot\nabla_{\left( y_{2}, y_{3}\right)}u\right) \right]   \nonumber\\
  + O\left(\epsilon^{2}\vert x_{1}\vert^{2}\right) \vert\nabla^{2}u\vert +  O\left(\epsilon^{2}\vert x_{1}\vert^{2}\right)\vert\nabla u\vert.
\label{lapl} 
\eern
      
\bigskip

Now, let us consider a smooth domain $\tilde{\Omega}\subset\R^{3}$ and 
$\tilde{\Omega}_{\epsilon}=\frac{1}{\epsilon}\tilde{\Omega}$. 
In the same way we can describe $\partial\tilde{\Omega}_{\epsilon}$ 
near a generic point $Q\in\partial\tilde{\Omega}_{\epsilon}$. 
Without loss of generality, we can assume that $Q=0\in\R^{3}$, 
that $\left\lbrace x_{3}=0\right\rbrace$ is the tangent plane 
of $\partial\tilde{\Omega}_{\epsilon}$, or $\partial\tilde{\Omega}$, at $Q$, 
and that the outer normal $\nu\left(Q\right) = \left(0, 0, -1\right)$. 
In a neighborhood of $Q$, let $x_{3} = \psi\left(x_{1}, x_{2}\right)$ 
be a local parametrization of $\partial\tilde{\Omega}$. 
Then one has, for $\vert\left(x_{1}, x_{2}\right)\vert <\mu_{1}$,
\bern 
    x_{3} &=& \psi\left(x_{1}, x_{2}\right)  \nonumber\\
              &=& \frac{1}{2}\left(A_{Q}\left(x_{1}, x_{2}\right)\cdot\left(x_{1}, x_{2}\right)\right) + 
                 C_{Q}\left(x_{1}, x_{2}\right)  + O\left(\vert\left(x_{1}, x_{2}\right)\vert^{4}\right),
\nonumber\eern
where $A_{Q}$ is the Hessian of $\psi$ at $\left(0, 0\right)$ and $C_{Q}$ is 
a cubic polynomial, which is given precisely by
\be C_{Q}\left(x_{1}, x_{2}\right) =  \frac{1}{6}\sum_{i,j,k=1}^{2}\frac{\partial^{3}\psi}{\partial x_{i}\partial x_{j}\partial x_{k}}\left(0, 0\right)x_{i}x_{j}x_{k}. \nonumber \ee
On the other hand, $\partial\tilde{\Omega}_{\epsilon}$ is parametrized by 
$x_{3}= \psi_{\epsilon}\left(x_{1}, x_{2}\right) := \frac{1}{\epsilon} \psi\left(\epsilon x_{1}, \epsilon x_{2}\right)$, 
for which the following expansions hold
\bern 
    \psi_{\epsilon}\left(x_{1}, x_{2}\right) &=& \frac{\epsilon}{2}\left(A_{Q}\left(x_{1}, x_{2}\right)\cdot\left(x_{1}, x_{2}\right)\right)   + 
    \epsilon^{2}C_{Q}\left(x_{1}, x_{2}\right) +  O\left(\epsilon^{3}\vert\left(x_{1}, x_{2}\right)\vert^{4}\right), \nonumber\\
     \frac{\partial\psi_{\epsilon}}{\partial x_{i}}\left(x_{1}, x_{2}\right) &=& \epsilon\left(A_{Q}\left(x_{1}, x_{2}\right)\right)_{i} +  
      \epsilon^{2}D_{Q}^{i}\left(x_{1}, x_{2}\right)   + O\left(\epsilon^{3}\vert\left(x_{1}, x_{2}\right)\vert^{3}\right),
 \label{exp1}\eern
where $D_{Q}^{i}$ are quadratic forms in $\left(x_{1}, x_{2}\right)$ given by
\be       D_{Q}^{i}\left(x_{1}, x_{2}\right) = \frac{1}{2}\sum_{j,k=1}^{2}\frac{\partial^{3}\psi}{\partial x_{i}\partial x_{j}\partial x_{k}}\left(0, 0\right)x_{j}x_{k}. \nonumber \ee
Concerning the outer normal $\nu$, we have also
\bern 
   \nu = \frac{\left(\frac{\partial\psi_{\epsilon}}{\partial x_{1}}, \frac{\partial\psi_{\epsilon}}{\partial x_{2}}, -1\right) }{\sqrt{1 + \vert\nabla\psi_{\epsilon}\vert^{2}}} 
       = \left(\epsilon\left(A_{Q}\left(x_{1}, x_{2}\right)\right) +   \epsilon^{2}D_{Q}\left(x_{1}, x_{2}\right), -1 + \frac{1}{2}\epsilon^{2}\vert A_{Q}\left(x_{1}, x_{2}\right)\vert^{2}\right)  \nonumber\\
         +     O\left(\epsilon^{3}\vert\left(x_{1}, x_{2}\right)\vert^{3}\right). 
\label{nu}\eern

Now we introduce a new set of coordinates on $B_{\frac{\mu_{1}}{\epsilon}}\left(Q\right) \bigcap\tilde{\Omega}_{\epsilon}$: 
\be       z_{1}=x_{1},  \qquad  z_{2}=x_{2},        \qquad     z_{3} =  x_{3} - \psi_{\epsilon}\left(x_{1}, x_{2}\right). \nonumber\ee
The advantage of these coordinates is that $\partial\tilde{\Omega}_{\epsilon}$ identifies with $\left\lbrace z_{3}=0\right\rbrace$, 
but, as before, 
the corresponding metric $\tilde{g}=\left(\tilde{g}_{ij}\right)_{ij}$ 
will not be flat anymore. 
Its coefficients are given by
\bern \left(\tilde{g}_{ij}\right) = \left(\frac{\partial x}{\partial z_{i}}\cdot\frac{\partial x}{\partial z_{j}}\right) =
\left( 
\begin{array}{ccc} 
    1 +  \frac{\partial\psi_{\epsilon}}{\partial z_{1}}\frac{\partial\psi_{\epsilon}}{\partial z_{1}}  
     & \frac{\partial\psi_{\epsilon}}{\partial z_{1}}\frac{\partial\psi_{\epsilon}}{\partial z_{2}} 
     & \frac{\partial\psi_{\epsilon}}{\partial z_{1}}    \\ 
    \frac{\partial\psi_{\epsilon}}{\partial z_{2}}\frac{\partial\psi_{\epsilon}}{\partial z_{1}} 
    &  1 +  \frac{\partial\psi_{\epsilon}}{\partial z_{2}}\frac{\partial\psi_{\epsilon}}{\partial z_{2}}      
    &  \frac{\partial\psi_{\epsilon}}{\partial z_{2}}      \\
      \frac{\partial\psi_{\epsilon}}{\partial z_{1}}     &   \frac{\partial\psi_{\epsilon}}{\partial z_{2}}  & 1
\end{array} 
\right).
\nonumber\eern
From the estimates in $(\ref{exp1})$ it follows that
\be       \tilde{g}_{ij} = Id + \epsilon A + \epsilon^{2}B + O\left(\epsilon^{3}\vert\left(z_{1}, z_{2}\right)\vert^{3}\right),      \label{gij1}   \ee
where
\be
 A =
\left( 
\begin{array}{cc} 
    0 &  A_{Q}\left(z_{1}, z_{2}\right)   \\ 
    \left(A_{Q}\left(z_{1}, z_{2}\right)\right)^{T}   & 0 
\end{array} 
\right), \nonumber\ee
and
\be
B =
\left( 
\begin{array}{cc} 
     A_{Q}\left(z_{1}, z_{2}\right)\otimes A_{Q}\left(z_{1}, z_{2}\right)  & D_{Q}\left(z_{1}, z_{2}\right)\\ 
    \left(D_{Q}\left(z_{1}, z_{2}\right)\right)^{T}   & 0 
\end{array} 
\right).\footnote{If the vector $v$ has components $\left(v_{i}\right)_{i}$, the notation $v\otimes v$ 
denotes the square matrix with entries $\left(v_{i}v_{j}\right)_{ij}$.} \nonumber\ee
It is also easy to check that the inverse matrix $\left(\tilde{g}^{ij}\right)$ 
is of the form $\tilde{g}^{ij} = Id - \epsilon A + \epsilon^{2}C + O\left(\epsilon^{3}\vert\left(z_{1}, z_{2}\right)\vert^{3}\right)$, 
where
\be
C =
\left( 
\begin{array}{cc} 
     0  & -D_{Q}\left(z_{1}, z_{2}\right)\\ 
    -\left(D_{Q}\left(z_{1}, z_{2}\right)\right)^{T}   & \vert A_{Q}\left(z_{1}, z_{2}\right)\vert^{2}
\end{array} 
\right).  
\nonumber\ee
Furthermore one has $\det\tilde{g} = 1$. 
Therefore, by $(\ref{gij1})$, for any smooth function $u$ there holds
\bern 
    \Delta_{\tilde{g}}u = \Delta u -\epsilon\left[2\left(A_{Q}\left(z_{1}, z_{2}\right)\cdot\nabla_{\left(z_{1}, z_{2}\right)}\frac{\partial u}{\partial z_{3}}\right) 
    + trA_{Q}\frac{\partial u}{\partial z_{3}}\right]   \nonumber\\
  + \epsilon^{2}\left[-2\left(D_{Q}\cdot\nabla_{\left(z_{1}, z_{2}\right)}\frac{\partial u}{\partial z_{3}}\right)  + 
     \vert A_{Q}\left(z_{1}, z_{2}\right)\vert^{2}\frac{\partial^{2} u}{\partial z_{3}\partial z_{3}} -
      divD_{Q}\frac{\partial u}{\partial z_{3}}\right]   \nonumber\\
  + O\left(\epsilon^{3}\vert\left(z_{1}, z_{2}\right)\vert^{3}\right) \vert\nabla^{2}u\vert +  O\left(\epsilon^{3}\vert\left(z_{1}, z_{2}\right)\vert^{3}    \right)\vert\nabla u\vert.
\nonumber\eern
Moreover, from $(\ref{nu})$, we obtain the expression of the unit outer normal to $\partial\tilde{\Omega}_{\epsilon}$, 
$\tilde{\nu}$, in the new coordinates $z$:
\bern 
   \tilde{\nu} =  \left(\epsilon\left(A_{Q}\left(z_{1}, z_{2}\right)\right) +  \epsilon^{2}D_{Q}\left(z_{1}, z_{2}\right), -1 + \frac{3}{2}\epsilon^{2}\vert A_{Q}\left(z_{1}, z_{2}\right)\vert^{2}\right)  \nonumber\\
         +     O\left(\epsilon^{3}\vert\left(z_{1}, z_{2}\right)\vert^{3}\right). 
\nonumber\eern 
Finally the area-element of $\partial\tilde{\Omega}_{\epsilon}$ can be estimated as
\be     d\sigma = \left( 1+  O\left(\epsilon^{2}\vert\left(z_{1}, z_{2}\right)\vert^{2}\right)\right) dz_{1}dz_{2}. \nonumber\ee

Now, locally, in a suitable neighborhood of $Q\in\Gamma$, 
we can consider $\Omega$ as the intersection of two smooth domains 
$\tilde{\Omega}_{1}$ and $\tilde{\Omega}_{2}$ if the opening angle at $Q$ is less than $\pi$, 
or as the union of them if the opening angle is greater than $\pi$. 
In the first case one has 
$\partial\Omega = \left(\partial\tilde{\Omega}_{1}\cap\tilde{\Omega}_{2}\right) \cup \left(\partial\tilde{\Omega}_{2}\cap\tilde{\Omega}_{1}\right)$, 
whereas in the second case 
$\partial\Omega = \left(\partial\tilde{\Omega}_{1}\cap\tilde{\Omega}^{c}_{2}\right) \cup \left(\partial\tilde{\Omega}_{2}\cap\tilde{\Omega}^{c}_{1}\right)$. 
Then, locally, one can straighten $\Gamma$ 
and stretch the two parts of the boundary using the coordinates $z$ for the 
smooth domains $\tilde{\Omega}_{1}$ and $\tilde{\Omega}_{2}$.


\section{Study of the non degeneracy for the unperturbed problem in the cone}

\noindent
Let us consider $K=\tilde{K}\times\R\subset\R^{3}$, 
where $\tilde{K}\subset\R^{2}$ is a cone of opening angle $\alpha$, 
and the problem 
\bern
\left\{ 
\begin{array}{ll} 
         -\Delta u + u = u^ {p}   \quad & \mathrm{in\ } K,     \\
          \frac{\partial u}{\partial \nu} = 0 \qquad  & \mathrm{on\ } \partial K,
 \end{array} 
\right.         
 \label{problem2}\eern
where $p>1$. 
 
\noindent
If $p<5$ and if $u\in W^{1,2}\left(K\right)$, 
solutions of $(\ref{problem2})$ can be found as critical points 
of the functional $I_{K}:W^{1,2}\left(K\right)\rightarrow\R$ defined as
\be  I_{K}\left(u\right)  = \frac{1}{2} \int_{K}\left(\vert\nabla u\vert^ {2} + u^ {2}\right)dx - \frac{1}{p+1}\int_{K}\vert u\vert^ {p+1}dx.
  \label{var2}
\ee
Note that $I_{k}$ is well defined on $W^{1,2}\left(K\right)$; 
in fact, since $K$ is Lipschitz, 
the Sobolev embeddings hold for $p\leq 5$; 
see for instance \cite{Ad}, \cite{Gri}.  

Let us consider also the elliptic equation in $\R^{3}$
\be   -\Delta u + u = u^{p},    \quad u\in W^{1,2}\left(\R^{3}\right),    \quad  u>0,    \label{problem3}\ee
which has a positive radial solution $U$; 
see for instance \cite{AM}, \cite{BL}, \cite{Ma}, \cite{St}. 
It has been shown in \cite{Kw} that such a solution is unique. 
Moreover $U$ and its radial derivatives decay to zero exponentially: 
more precisely satisfy the properties 
\be   \lim_{r\rightarrow +\infty} e^{r} r U\left(r\right) =  c_{3, p},   \qquad   
         \lim_{r\rightarrow +\infty}\frac{U'\left(r\right)}{U\left(r\right)} =  -   \lim_{r\rightarrow +\infty}\frac{U''\left(r\right)}{U\left(r\right)} = -1,
\nonumber\ee 
where $r=\vert x\vert$ and $c_{3, p}$ is a positive constant depending only on the dimension $n=3$ and $p$; 
see \cite{BL}. 

Now, if $p$ is subcritical, the function $U$ is also a solution of problem $(\ref{problem2})$. 
Moreover, if we consider a coordinate system with the $x_{1}$-axis 
coinciding with the edge of $K$, 
the problem $(\ref{problem2})$ is invariant 
under a translation along the $x_{1}$-axis. 
This means that any 
\be     U_{x_{1}}\left(x\right) = U\left(x - \left(x_{1}, 0, 0\right)\right)      \nonumber\ee
is also a solution of $(\ref{problem2})$. 
Then the functional $I_{k}$ has a non-compact critical manifold given by
\be        Z  =  \left\lbrace  U_{x_{1}}\left(x\right) :  x_{1}\in\R\right\rbrace \simeq\R .   \nonumber\ee
Now, to apply the results of the previous section, 
we have to characterize the spectrum and some eigenfunctions of $I_{K}''\left(U_{x_{1}}\right)$. 
More precisely we have to show the following
\begin{lemma} 
Suppose $\alpha\in\left(0, 2\pi\right)\setminus\left\lbrace\pi\right\rbrace$. 
Then the following properties are true:
\begin{itemize}
\item[a)] $T_{U_{x_{1}}}Z=\Ker\left[I_{K}''\left(U_{x_{1}}\right)\right]$, 
for all $x_{1}\in\R$;
\item[b)]$I_{K}''\left(U_{x_{1}}\right)$ is an index $0$ Fredholm map 
\footnote{A linear map $T\in L\left(H,H\right)$ is Fredholm if the kernel is finite-dimensional and the image is closed and has finite codimension. The index of $T$ is $\dim\left(\Ker\left[T\right]\right) - codim\left(Im\left[T\right]\right)$.}  , for all $x_{1}\in\R$.
\end{itemize}   
\label{lem:cono}\end{lemma} 

\begin{remark}
The properties $a)$ and $b)$ imply that $Z$ satisfies condition $ii)$ 
in Subsection $2.1$ and then it is non-degenerate for $I_{K}$.
\end{remark}

\begin{proof} 
We will prove the lemma by taking $x_{1}=0$, hence $U_{0}=U$. 
The case of a general $x_{1}$ will follow immediately. 

Let us show $a)$. 
It is known that there holds the inclusion 
$T_{U}Z\subset\Ker\left[I_{K}''\left(U\right)\right]$; 
see for instance \cite{AM}, Section $2.2$. 
Then it is sufficient to prove that $\Ker\left[I_{K}''\left(U\right)\right]\subset T_{U}Z$. 
Now, $v\in W^{1,2}\left(K\right)$ belongs to $\Ker\left[I_{K}''\left(U\right)\right]$ if and only if
\bern
\left\{ 
\begin{array}{ll} 
         -\Delta v + v = pU^ {p-1}v   \quad & \mathrm{in\ } K,      \\
          \frac{\partial v}{\partial \nu} = 0 \qquad  & \mathrm{on\ } \partial K.
 \end{array} 
\right.         
\label{ker}\eern 
We use the polar coordinates in $K$, $r$, $\theta$, $\varphi$, 
where $r\geq 0$, $0\leq\theta\leq\pi$ and $0\leq\varphi\leq\alpha$. 
Then we write $v\in W^{1,2}\left(K\right)$ in the form
\be        v\left(x_{1}, x_{2}, x_{3}\right) = \sum_{k=0}^{\infty} v_{k}\left(r\right) Y_{k}\left(\theta, \varphi\right),    \label{scomp}\ee
where the $Y_{k}\left(\theta, \varphi\right)$ are the spherical functions satisfying
\bern
\left\{ 
\begin{array}{ll} 
         -\Delta_{S^{2}} Y_{k}  = \lambda_{k}Y_{k}  \quad & \mathrm{in\ } K,      \\
          \frac{\partial Y_{k}}{\partial \varphi} = 0 \qquad  &  \varphi =0, \alpha.
 \end{array} 
\right.         
\label{Yk}\eern
Here $\Delta_{S^{2}}$ denotes the Laplace-Beltrami operator 
on $S^{2}$ (acting on the variables $\theta$, $\varphi$). 
To determine $\lambda_{k}$ and the expression of $Y_{k}$, 
let us split $Y_{k}$ as 
\be        Y_{k}\left(\theta, \varphi\right) = \sum_{m=0}^{\infty} \Theta_{k,m}\left(\theta\right) \Phi_{k,m}\left(\varphi\right)  \nonumber\ee
so that 
\bern 
     \Delta_{S^{2}} Y_{k}    &=&   \sum_{m=0}^{\infty} \left[\frac{1}{\sin\theta} \frac{\partial}{\partial\theta} 
                \left(\sin\theta\frac{\partial}{\partial\theta} \right)  +  \frac{1}{\sin^{2}\theta} \frac{\partial^{2}}{\partial\varphi^{2}}\right] 
                 \Theta_{k,m} \Phi_{k,m}        \nonumber\\
                                       &=&   \sum_{m=0}^{\infty} \left[\frac{1}{\sin\theta}  \frac{d}{d\theta}\left(\sin\theta\Theta '_{k,m}\right) \Phi_{k,m} 
                  + \frac{1}{\sin^{2}\theta}  \Theta_{k,m}\Phi ''_{k,m}\right].
\nonumber\eern
Then $(\ref{Yk})$ becomes
\bern
\left\{ 
\begin{array}{ll} 
         -\sum_{m=0}^{\infty} \left[\frac{1}{\sin\theta}  \frac{d}{d\theta}\left(\sin\theta\Theta '_{k,m}\right) \Phi_{k,m} 
                  + \frac{1}{\sin^{2}\theta}  \Theta_{k,m}\Phi ''_{k,m}\right]   
                  =  \sum_{m=0}^{\infty}\lambda_{k, m}\Theta_{k,m}\Phi_{k,m}     \quad & \mathrm{in\ } K,     \\
           \Phi '_{k,m}\left(0\right) =\Phi '_{k,m}\left(\alpha\right)=0.
 \end{array} 
\right.         
\label{Yk1}\eern
If we require that for all $m$
\bern
\left\{ 
\begin{array}{l} 
         -\Phi ''_{k,m}  = \mu_{m}\Phi_{k,m}  \quad  \mathrm{in\ } \left[0, \alpha\right],    \\
          \Phi '_{k,m}\left(0\right) =\Phi '_{k,m}\left(\alpha\right)=0,
 \end{array} 
\right.         
\label{fi}\eern
we obtain that $\Phi_{k,m}\left(\varphi\right)=a_{k, m} \cos\left(\frac{\pi m}{\alpha}\varphi\right)$ 
satisfies $(\ref{fi})$ with $\mu_{m}=\frac{\pi^{2} m^{2}}{\alpha^{2}}$. 
Replacing this expression in $(\ref{Yk1})$ we have
\bern
\left\{ 
\begin{array}{ll} 
      \sum_{m=0}^{\infty} \left[-\frac{1}{\sin\theta}  \frac{d}{d\theta}\left(\sin\theta\Theta '_{k,m}\right) 
                  + \frac{1}{\sin^{2}\theta} \frac{\pi^{2} m^{2}}{\alpha^{2}} \Theta_{k,m}\right]\Phi_{k,m}   
                  =  \sum_{m=0}^{\infty}\lambda_{k, m}\Theta_{k,m}\Phi_{k,m}   \quad & \mathrm{in\ } K,  \\
            \Phi '_{k,m}\left(0\right) =\Phi '_{k,m}\left(\alpha\right)=0.
 \end{array} 
\right.                              
\nonumber \eern
Since the $\Phi_{k,m}$ are independent, 
we have to solve, for every $m$, the Sturm-Liouville equation
\be     \frac{1}{\sin\theta}  \frac{d}{d\theta}\left(\sin\theta\Theta '_{k,m}\right)   
           +  \left[ \lambda_{k, m} - \frac{1}{\sin^{2}\theta} \frac{\pi^{2} m^{2}}{\alpha^{2}}\right] \Theta_{k,m}  =  0. 
\label{sturm}\ee
Let us rewrite  $(\ref{sturm})$ in the following form
\be     -\frac{1}{\sin\theta}  \frac{d}{d\theta}\left(\sin\theta\Theta '_{k,m}\right)   
            + \frac{1}{\sin^{2}\theta} \frac{\pi^{2} m^{2}}{\alpha^{2}}\Theta_{k,m}  =  \lambda_{k}\Theta_{k,m},
\label{sturm1}\ee
so that we have to determine the eigenvalues $\lambda_{k,m}$ and 
the eigenfunctions of the operator 
\be         -\frac{1}{\sin\theta} \frac{d}{d\theta} \left(\sin\theta\Theta'\left(\theta\right)\right)     
                +\frac{1}{\sin^{2}\theta} \frac{\pi^{2}m^{2}}{\alpha^{2}}\Theta\left(\theta\right).
\nonumber\ee
In order to do this, let us consider the case $\alpha=\pi$, 
that is the following equation
 \be     -\frac{1}{\sin\theta}  \frac{d}{d\theta}\left(\sin\theta\Theta'_{k,m}\right)   
           + \frac{1}{\sin^{2}\theta} m^{2} \Theta_{k,m}  =  \lambda_{k,m}\Theta_{k,m}.
\label{sturm2}\ee
Now, for every $m$, $(\ref{sturm2})$ has solution if 
$\lambda_{k,m}=k\left(k+1\right)$, with $k\geq\vert m\vert$, 
and the solutions are the Legendre polynomials 
$\Theta_{k,m}\left(\theta\right)=P_{k,m}\left(\cos\theta\right)$; 
see for instance \cite{Gro}, \cite{Ho}, \cite{Mu}, \cite{Mu1}. 
Then, for a given value of $k$, there are $2k+1$ 
independent solutions of the form $\Theta_{k,m}\left(\theta\right) \Phi_{k,m}\left(\varphi\right)$, 
one for each integer $m$ with $-k\leq m\leq k$. 
Now, by the classical comparison principle, 
if we decrease $\alpha$ the corresponding eigenvalues $\lambda_{k,m}$, 
given by $(\ref{sturm1})$, should increase, 
whereas if we increase $\alpha$ they should decrease; 
see for instance \cite{Cha}. 
More precisely, if $m=0$ the equations $(\ref{sturm1})$ and $(\ref{sturm2})$ are the same, 
therefore the eigenvalues do not change (and they are $0, 2, 6,...$). 
If $m\geq 1$ we cannot give an explicit expression for the $\lambda_{k,m}$ 
for general $\alpha$, but we can use the comparison principle. 
In conclusion, we obtain that each 
$Y_{k}=\sum_{m=0}^{\infty}\Theta_{k,m}\Phi_{k,m}$ satisfies
\be    -\Delta_{S^{2}} Y_{k}  = \lambda_{k,m} Y_{k}.    \label{bel}\ee

\noindent
Now, one has that
\be      \Delta \left(v_{k} Y_{k}\right)  =   
                       \Delta_{r} \left(v_{k}\right)  Y_{k}   +  \frac{1}{r^{2}} v_{k} \Delta_{S^{2}} Y_{k},   \label{rad}\ee
where $\Delta_{r}$ denotes the Laplace operator in radial coordinates, 
that is $\Delta_{r}=\frac{\partial^{2}}{\partial r^{2}}+\frac{2}{r}\frac{\partial}{\partial r}$.
Then, using $(\ref{scomp})$, $(\ref{bel})$ and $(\ref{rad})$, the condition $(\ref{ker})$ becomes
\be       \sum_{k=0}^{\infty} \left[-v''_{k} - \frac{2}{r} v'_{k}  +  v_{k}  +  \frac{\lambda_{k,m}}{r^{2}} v_{k}   -  pU^{p-1} v_{k}\right] Y_{k} =  0. 
\nonumber\ee
Since the $Y_{k}$ are independent, we get the following equations for $v_{k}$:
\be     A_{k,m}\left(v_{k}\right) := -v''_{k} - \frac{2}{r} v'_{k}  +  v_{k}  +  \frac{\lambda_{k,m}}{r^{2}} v_{k}   -  pU^{p-1} v_{k}  =  0, 
          \quad   m=0, 1, 2...,\ k\geq m.         \nonumber\ee

Let us first consider the case $m=0$. 
If $k=0$, we have to find a $v_{0}$ such that 
\be    A_{0,0}\left(v_{0}\right) = -v''_{0} - \frac{2}{r} v'_{0}  +  v_{0}   -  pU^{p-1} v_{0}  =  0.     \nonumber\ee
It has been shown in \cite{Kw}, Lemma $6$, that all the solutions of $A_{0,0}\left(v\right)=0$ 
are unbounded. 
Since we are looking for solutions $v_{0}\in W^{1,2}\left(\R\right)$, 
it follows that $v_{0}=0$. 

For $k=1$ we have to solve 
\be    A_{1,0}\left(v_{1}\right) = -v''_{1} - \frac{2}{r} v'_{1}  +  v_{1}   + \frac{2}{r^{2}} v_{1} -  pU^{p-1} v_{1}  =  0.   \nonumber\ee
Let $\hat{U}\left(r\right)$ denote the function such that 
$U\left(x\right)=\hat{U}\left(\vert x\vert\right)$, where $U\left(x\right)$ is 
the solution of $(\ref{problem3})$. 
Reasoning as in the proof of Lemma $4.1$ in \cite{AM}, 
we obtain that the family of solutions 
of $A_{1,0}\left(v_{1}\right)=0$, 
with $v_{1}\in W^{1,2}\left(\R\right)$, 
is given by $v_{1}\left(r\right)=c\hat{U}'\left(r\right)$, 
for some $c\in\R$. 

Now, let us show that the equation $A_{k,0}\left(v_{k}\right)=0$ 
has only the trivial solution in $W^{1,2}\left(\R\right)$, 
provided that $k\geq 2$. 
First of all, note that the operator $A_{1,0}$ has the solution $\hat{U}'$ 
which does not change sign in $\left(0, \infty\right)$ 
and therefore is a non-negative operator. 
In fact, if $\sigma$ denotes its smallest eigenvalue, 
any corresponding eigenfunction $\psi_{\sigma}$ 
does not change sign. 
If $\sigma<0$, then $\psi_{\sigma}$ should be orthogonal 
to $\hat{U}'$ and this is a contradiction. 
Thus $\sigma\geq 0$ and $A_{1,0}$ is non-negative. 
Now, we can write
\be        A_{k,0}   =    A_{1,0}      +     \frac{\lambda_{k,0}-2}{r^{2}}.     \nonumber\ee
Since $\lambda_{k,0}-2>0$ whenever $k\geq 2$, 
it follows that $A_{k,0}$ is a positive operator. 
Thus $A_{k,0}\left(v_{k}\right)=0$ implies that $v_{k}=0$. 

If $m\geq 1$ and $\alpha<\pi$, 
using the comparison principle, 
we obtain that each $\lambda_{k, m}$ is greater than $2$. 
Then, reasoning as above, 
we have that each $v_{k}=0$. 

Let us consider the case $\alpha>\pi$. 
If $m=1$ and $k=1$, 
using again the comparison principle, 
we have that $0<\lambda_{1, 1}<2$; 
whereas for $m=1$, $k\geq 2$, 
and for $m\geq 2$, $k\geq m$, 
we have that each $\lambda_{k, m}>2$. 
Then in the last two cases we can use 
the non-negativity of the operator $A_{1,0}$ 
and conclude that $v_{k}=0$. 
In the case $m=1$ and $k=1$ we note that 
the operator 
\be      A_{1, 1}\left(v_{1}\right) := -v''_{1} - \frac{2}{r} v'_{1}  +  v_{1}  +  \frac{\lambda_{1, 1}}{r^{2}} v_{1}   -  pU^{p-1} v_{1}  \nonumber\ee
has a negative eigenvalue, instead of the eigenvalue $0$, since $\lambda_{1, 1}<2$. 
Then also $v_{1}=0$. 

Putting together all the previous information, 
we deduce that any $v\in\Ker\left[I''\left(U\right)\right]$ 
has to be of the form
\be      v\left(x_{1}, x_{2}, x_{3}\right)    =  c\hat{U}'\left(r\right)  Y_{1}\left(\theta, \varphi\right).          
\nonumber\ee   
Now, $Y_{1}$ is such that $-\Delta_{S^{2}} Y_{1} =\lambda_{1,m}Y_{1}$, 
namely it belongs to the kernel of the operator $-\Delta_{S^{2}}-\lambda_{1,m}Id$, 
and such a kernel is 1-dimensional. 
In conclusion, we find that 
\be     v\in span\left\lbrace \hat{U}' Y_{1} \right\rbrace  =  span\left\lbrace \frac{\partial U}{\partial x_{1}} \right\rbrace  = T_{U}Z.    \nonumber\ee
This proves that $a)$ holds. 
It is also easy to check that the operator $I_{K}''\left(U\right)$ 
is a compact perturbation of the identity, 
showing that $b)$ holds true, too. 
This complete the proof of Lemma $\ref{lem:cono}$.
\end{proof}

\begin{remark}
Since $U$ is a Mountain-Pass solution of $(\ref{problem3})$, 
the spectrum of $I_{K}''\left(U\right)$ has one negative simple eigenvalue, $1-p$, 
with eigenspace spanned by $U$ itself. 
Moreover, we have shown in the preceding lemma that $\lambda=0$ 
is an eigenvalue with multiplicity $1$ and 
eigenspace spanned by $\frac{\partial U}{\partial x_{1}}$. 
If $\alpha<\pi$ the rest of the spectrum is positive. 
Whereas if $\alpha>\pi$ there is an other negative simple eigenvalue, 
corresponding to an eigenfunction $\tilde{U}$ given by
\be   \tilde{U}\left(r, \theta, \varphi\right) = \tilde{u}\left(r\right) \cos\left(\frac{\pi}{\alpha}\varphi\right) \tilde{\Theta}\left(\theta\right), \nonumber\ee
where $\tilde{\Theta}$ satisfies $(\ref{sturm})$ with $m=1$ and $k=1$, 
and $\tilde{u}$ satisfies the equation
\be    -v'' - \frac{2}{r} v'  +  v  +  \frac{\lambda_{1,1}}{r^{2}} v   -  pU^{p-1} v  =  0.   \label{utilde}\ee
From $(\ref{utilde})$ one has that there exists a positive constant $C$ 
such that, for $r$ sufficiently large, $\tilde{u}\left(r\right)\leq C e^{-r/C}$. 
In conclusion, one has the following result:
\label{eigenvalues}\end{remark}

\begin{corollary} 
Let $U$ and $\tilde{U}$ be as above and consider the functional $I_{K}$ given in $(\ref{var2})$. 
Then for every $x_{1}\in\R$, $U_{x_{1}}\left(x\right) = U\left(x - \left(x_{1}, 0, 0\right)\right)$ 
is a critical point of $I_{K}$. 
Moreover, the kernel of $I_{K}''\left(U\right)$ is generated by $\frac{\partial U}{\partial x_{1}}$. 
If $\alpha<\pi$ the operator has only one negative eigenvalue, 
and therefore there exists $\delta>0$ such that
\be      I_{K}''\left(U\right)\left[v, v\right]  \geq \delta \norm{v}^{2},   \quad \mathrm{for\ every\ }      v\in W^{1,2}\left(K\right),     v\bot U, \frac{\partial U}{\partial x_{1}}. \nonumber\ee  
If $\alpha>\pi$ the operator has two negative eigenvalues, 
and therefore there exists $\delta>0$ such that 
\be      I_{K}''\left(U\right)\left[v, v\right]  \geq \delta \norm{v}^{2},   \quad \mathrm{for\ every\ }      
v\in W^{1,2}\left(K\right),     v\bot U, \tilde{U}, \frac{\partial U}{\partial x_{1}}. \nonumber\ee  
\label{cor:nondeg}\end{corollary}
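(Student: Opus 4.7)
The corollary collects together what has already been established in Lemma \ref{lem:cono} and Remark \ref{eigenvalues}, so my plan is to organize the proof into three self-contained observations rather than re-doing any analysis.

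First I would check that every $U_{x_1}$ is a critical point of $I_K$. Since $U$ solves $-\Delta u + u = u^p$ on all of $\R^3$, the translate $U_{x_1}$ satisfies the PDE on $K$. The Neumann condition $\partial_\nu U_{x_1}=0$ on $\partial K$ then follows from the radial symmetry of $U$ about the point $(x_1,0,0)$: at a boundary point $P=(y_1,y_2,y_3)\in\partial K$, $\nabla U_{x_1}(P)$ is parallel to $(y_1-x_1,y_2,y_3)$, while the outer normal $\nu$ lies in the $(x_2,x_3)$-plane and is perpendicular to the ray through $(y_2,y_3)$ that forms $\partial\tilde K$; hence $\nabla U_{x_1}(P)\cdot\nu=0$. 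By translation invariance it suffices to argue for $x_1=0$, and the statement about $\Ker[I_K''(U)]$ is then exactly part $i)$ of Lemma \ref{lem:cono}, namely $\Ker[I_K''(U)]=T_UZ=\mathrm{span}\{\partial U/\partial x_1\}$.

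Next I would establish the spectral gap. The quadratic form $I_K''(U)[v,v]=\int_K(|\nabla v|^2+v^2)\,dx-p\int_K U^{p-1}v^2\,dx$ is of the form $\langle (Id-T)v,v\rangle$, where $T$ corresponds to multiplication by $pU^{p-1}$ followed by the inverse of $-\Delta+1$. Since $U$ decays exponentially, multiplication by $U^{p-1}$ is a compact operator on $W^{1,2}(K)$ (combining the exponential decay with the local compact Sobolev embedding on bounded subdomains). Therefore $I_K''(U)$ is a compact perturbation of the identity, its essential spectrum is $\{1\}$, and the rest of the spectrum consists of isolated eigenvalues of finite multiplicity that can accumulate only at $1$.

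Finally I would use the spectral description in Remark \ref{eigenvalues}. When $\alpha<\pi$, the only non-positive eigenvalues are $1-p<0$ (with eigenfunction $U$, by Mountain-Pass characterization) and $0$ (with eigenfunction $\partial U/\partial x_1$, by the kernel part above); since the positive spectrum cannot accumulate at $0$, there exists $\delta>0$ so that the restriction of $I_K''(U)$ to the $L^2$-orthogonal complement of $\{U,\partial U/\partial x_1\}$ satisfies $I_K''(U)[v,v]\geq\delta\|v\|^2$. When $\alpha>\pi$, the same argument applies after also quotienting out $\tilde U$, the eigenfunction for the additional negative eigenvalue identified in Remark \ref{eigenvalues}. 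The main conceptual obstacle is really contained in Lemma \ref{lem:cono}: once the kernel is pinned down and the compactness structure is noted, the spectral gap is automatic.
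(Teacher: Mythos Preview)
Your proposal is correct and matches the paper's approach: the corollary is stated there without a separate proof, as an immediate consequence of Lemma~\ref{lem:cono} (for the kernel and the Fredholm/compact-perturbation structure) and Remark~\ref{eigenvalues} (for the count of negative eigenvalues), which is exactly what you assemble. One small slip: the orthogonality in the coercivity estimate should be taken in $W^{1,2}(K)$ rather than $L^2$, since that is the ambient Hilbert space for $I_K''$.
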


\section{Proof of Theorem \ref{th:solution}} 

For every $Q$ on the edge $\Gamma$ of $\partial\Omega_{\epsilon}$, 
let $\mu=\min\left\lbrace \mu_{i}\right\rbrace$, 
so that in $B_{\frac{\mu}{\epsilon}}\left(Q\right) \bigcap\Omega_{\epsilon}$ 
we can use the new set of coordinates $z$. 
Now we choose a cut-off function $\varphi_{\mu}$ with the following properties
\bern
\left\{ 
\begin{array}{ll} 
           \varphi_{\mu}\left(x\right) = 1 \qquad & \mathrm{in\ }   B_{\frac{\mu}{4}}\left(Q\right),    \\
            \varphi_{\mu}\left(x\right) = 0 \qquad  & \mathrm{in\ } \R^{3}\setminus B_{\frac{\mu}{2}}\left(Q\right), \\
            \vert\nabla\varphi_{\mu}\vert + \vert\nabla^{2}\varphi_{\mu}\vert\leq C \qquad  & \mathrm{in\ } B_{\frac{\mu}{2}}\left(Q\right)\setminus B_{\frac{\mu}{4}}\left(Q\right).
 \end{array} 
\right.         
 \label{cutoff}\eern 
For any $Q\in\Gamma$, we define the following function, 
in the coordinates $\left(z_{1}, z_{2}, z_{3}\right)$,
\be U_{Q,\epsilon}\left(z\right)  :=  
     \varphi_{\mu}\left(\epsilon z\right)U_{Q}\left(z\right), 
\label{function}\ee
where $U_{Q}\left(z\right)=U\left(z-Q\right)$. 
Then we consider the manifold
\be     Z_{\epsilon} = \left\lbrace U_{Q,\epsilon}   :    Q\in\Gamma\right\rbrace.     \nonumber\ee
Now, we estimate the gradient of $I_{\epsilon}$ at $U_{Q,\epsilon}$, 
showing that $Z_{\epsilon}$ constitute a manifold of pseudo-critical points of $I_{\epsilon}$.

\begin{lemma}
There exists $C>0$ such that for $\epsilon$ small there holds
\be \norm{I'_{\epsilon}\left(U_{Q,\epsilon}\right)} \leq C\epsilon,   \qquad \mathrm{for\ all\ } Q\in\Gamma. \nonumber\ee
\label{lem:pseudo}\end{lemma}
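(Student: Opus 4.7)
The starting point is the expression
\[
I'_\epsilon(U_{Q,\epsilon})[v]=\int_{\Omega_\epsilon}\bigl(\nabla U_{Q,\epsilon}\cdot\nabla v+U_{Q,\epsilon}v-U_{Q,\epsilon}^{p}v\bigr)dx,
\]
which I want to bound by $C\epsilon\|v\|$ uniformly for $v\in W^{1,2}(\Omega_\epsilon)$. Since $U_{Q,\epsilon}$ is supported in $B_{\mu/(2\epsilon)}(Q)\cap\Omega_\epsilon$, I would localize to that neighborhood and pass to the composite straightening coordinates described at the end of Section~2: first the $y$-coordinates align $\Gamma$ with the $z_1$-axis, then the $z$-stretching is applied on each of the two smooth pieces meeting at $\Gamma$. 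In these coordinates the domain becomes a piece of the model cone $K=\tilde K\times\R$ of opening $\alpha(Q)$, the volume element is preserved ($\det\tilde g=1$), and the Euclidean metric is replaced by $\tilde g=\mathrm{Id}+\epsilon A+O(\epsilon^{2}|(z_1,z_2)|^{2})$ (cf.\ (\ref{gij}), (\ref{gij1})).

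The geometric key is that $U_{Q,\epsilon}(z)=\varphi_\mu(\epsilon z)U(z-Q)$ with $U$ the radial ground state centered at the edge point $Q$: since the radial direction from any $Q\in\Gamma$ to a point on a face of $\partial K$ lies inside that face, $U$ satisfies $\partial U/\partial\nu_K\equiv 0$ on $\partial K$ and of course $-\Delta U+U=U^{p}$ in $K$. Integrating by parts in the straightened coordinates,
\[
I'_\epsilon(U_{Q,\epsilon})[v]=\int\bigl(-\Delta_{\tilde g}U_{Q,\epsilon}+U_{Q,\epsilon}-U_{Q,\epsilon}^{p}\bigr)v\,d\mathrm{vol}_{\tilde g}+\int_{\partial}\frac{\partial U_{Q,\epsilon}}{\partial\tilde\nu}\,v\,d\sigma.
\]
I would split the bulk integrand into three pieces: $\varphi_\mu(\epsilon z)[-\Delta U+U-U^{p}]$ (identically zero on $K$); a cutoff commutator supported in $\{|z-Q|\in[\mu/(4\epsilon),\mu/(2\epsilon)]\}$, where $U$ and $\nabla U$ decay like $e^{-|z-Q|}$ and the contribution is exponentially small in $1/\epsilon$; and the metric correction $-(\Delta_{\tilde g}-\Delta)U_{Q,\epsilon}$, which by the expansion in Section~2 is bounded pointwise by $C\epsilon|(z_1,z_2)|\bigl(|\nabla^{2}U_{Q,\epsilon}|+|\nabla U_{Q,\epsilon}|\bigr)$. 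The exponential decay of $U$ makes this polynomially weighted quantity square-integrable with norm $O(\epsilon)$, and Cauchy--Schwarz yields a bulk contribution $\leq C\epsilon\|v\|$.

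For the boundary term, the outer normal $\tilde\nu$ on $\partial\Omega_\epsilon$ differs from the flat cone normal $\nu_K$ by $O(\epsilon|(z_1,z_2)|)$ (see (\ref{nu})), and because $\partial U/\partial\nu_K=0$ one has $\partial U_{Q,\epsilon}/\partial\tilde\nu=O(\epsilon|(z_1,z_2)|)|\nabla U|$ on the boundary, modulo exponentially small cutoff contributions. Combined with $d\sigma=(1+O(\epsilon^{2}))dz_1dz_2$ and the uniform trace inequality in $W^{1,2}(\Omega_\epsilon)$, the boundary integral is also $O(\epsilon)\|v\|$. Adding the three estimates proves the lemma. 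The main technical obstacle is the careful book-keeping at the edge itself, where the two $z$-charts (one per smooth face) must be patched consistently and one must verify that the radial symmetry of $U$ (and the exact vanishing of $\partial U/\partial\nu_K$) survives the composite change of variables to leading order; the control of the metric corrections on the ``vertical'' part of the edge, where $z_1$ can be large but $|(z_2,z_3)|$ stays small, also requires attention but is handled by the exponential decay of $U$ in the radial direction from $Q$.
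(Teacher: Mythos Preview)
Your proposal is correct and follows essentially the same route as the paper: integrate by parts in the straightened coordinates to obtain a bulk term plus a boundary term, then control each by using that $U$ solves $-\Delta U+U=U^{p}$ exactly on $K$ with $\partial U/\partial\nu_{K}=0$ on $\partial K$, so that only the $O(\epsilon)$ metric/normal corrections and the exponentially small cutoff commutators survive. The paper's proof is organized identically (their terms $I_{1},I_{2}$ for the two faces and $II_{1},II_{2},II_{3}$ for the bulk correspond exactly to your splitting), with the same use of the exponential decay of $U$ and the trace inequality; your cautionary remark about patching the two $z$-charts along the edge is a fair point that the paper also leaves largely implicit.
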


\begin{proof} 
Let $v\in W^{1,2}\left(\Omega_{\epsilon}\right)$. 
Since the function $U_{Q,\epsilon}$ is supported in $B:=B_{\frac{\mu}{2\epsilon}}\left(Q\right)$, see $(\ref{function})$, 
we can use the coordinate $z$ in this set, and we obtain
\bern
    I'_{\epsilon}\left(U_{Q,\epsilon}\right)\left[v\right]  
    &=& \int_{\partial\Omega_{\epsilon}}\frac{\partial U_{Q,\epsilon}}{\partial\tilde{\nu}} v d\tilde{\sigma} 
    + \int_{\Omega_{\epsilon}}\left(-\Delta_{\tilde{g}}U_{Q,\epsilon} + U_{Q,\epsilon} - \vert U_{Q,\epsilon}\vert^{p}\right) v dV_{\tilde{g}}\left(z\right) \nonumber\\
    &\doteqdot& I + II.
\nonumber\eern
Let us now estimate $I$: 
\be
I =  \int_{\partial\Omega_{\epsilon 1}}\frac{\partial U_{Q,\epsilon}}{\partial\tilde{\nu_{1}}} v d\tilde{\sigma_{1}} + 
       \int_{\partial\Omega_{\epsilon 2}}\frac{\partial U_{Q,\epsilon}}{\partial\tilde{\nu_{2}}} v d\tilde{\sigma_{2}}  \doteqdot 
       I_{1} + I_{2}.
\nonumber\ee
If $K=K_{\alpha\left(Q\right)}$ denotes the cone of angle equal to the angle of the edge in $Q$, we have
\bern
 I_{1} =  \int_{\partial K} \left(U_{Q}\left(z\right)\nabla\varphi_{\mu}\left(\epsilon z\right)\cdot\tilde{\nu_{1}}   
               + \varphi_{\mu}\left(\epsilon z\right)\nabla U_{Q}\left(z\right)\cdot\tilde{\nu_{1}}\right)  v d\tilde{\sigma_{1}} \nonumber\\
         = \int_{\partial K}  U_{Q}\left(z\right)\nabla\varphi_{\mu}\left(\epsilon z\right)\cdot\left(\epsilon\left(A_{Q}\left(z_{1}, z_{2}\right)\right) +  \epsilon^{2}D_{Q}\left(z_{1}, z_{2}\right), -1 + \frac{3}{2}\epsilon^{2}\vert A_{Q}\left(z_{1}, z_{2}\right)\vert^{2}\right)    \nonumber\\
               + \varphi_{\mu}\left(\epsilon z\right)\nabla U_{Q}\left(z\right)\cdot\left(\epsilon\left(A_{Q}\left(z_{1}, z_{2}\right)\right) +  \epsilon^{2}D_{Q}\left(z_{1}, z_{2}\right), -1 + \frac{3}{2}\epsilon^{2}\vert A_{Q}\left(z_{1}, z_{2}\right)\vert^{2}\right)  \nonumber\\
               v \left( 1+  O\left(\epsilon^{2}\vert\left(z_{1}, z_{2}\right)\vert^{2}\right)\right) dz_{1}dz_{2} \nonumber\\
       \doteqdot  a + b. 
\nonumber\eern
Since $\nabla\varphi_{\mu}\left(\epsilon\cdot\right)$ is supported in 
$\R^{3}\setminus B_{\frac{\mu}{4\epsilon}}\left(Q\right)$ and $U_{Q}$ has an exponential decay, 
we have that, for $\epsilon$ small,
\be \vert a\vert \leq C\epsilon e^{-\frac{\mu}{4\epsilon}} \int_{\partial K} \vert v\vert dz_{1}dz_{2}. \label{stima4}\ee
On the other hand
\bern 
     b  = \int_{\frac{\mu}{4\epsilon}\leq\vert z-Q\vert\leq\frac{\mu}{2\epsilon}} 
     \varphi_{\mu}\left(\epsilon z\right)\nabla U_{Q}\left(z\right)\cdot\left(\epsilon\left(A_{Q}\left(z_{1}, z_{2}\right)\right) +  
     \epsilon^{2}D_{Q}\left(z_{1}, z_{2}\right), -1 + \frac{3}{2}\epsilon^{2}\vert A_{Q}\left(z_{1}, z_{2}\right)\vert^{2}\right)  \nonumber\\
               v \left( 1+  O\left(\epsilon^{2}\vert\left(z_{1}, z_{2}\right)\vert^{2}\right)\right) dz_{1}dz_{2} \nonumber\\
           + \int_{\vert z-Q\vert\leq\frac{\mu}{4\epsilon}} 
     \varphi_{\mu}\left(\epsilon z\right)\nabla U_{Q}\left(z\right)\cdot\left(\epsilon\left(A_{Q}\left(z_{1}, z_{2}\right)\right) +  
     \epsilon^{2}D_{Q}\left(z_{1}, z_{2}\right), -1 + \frac{3}{2}\epsilon^{2}\vert A_{Q}\left(z_{1}, z_{2}\right)\vert^{2}\right)  \nonumber\\
               v \left( 1+  O\left(\epsilon^{2}\vert\left(y_{1}, y_{2}\right)\vert^{2}\right)\right) dy_{1}dy_{2} \nonumber\\
         \leq  C\epsilon e^{-\frac{\mu}{4\epsilon}} \int_{\partial K} \vert v\vert dz_{1}dz_{2} + 
            C\epsilon \int_{\partial K} \vert\nabla U_{Q}\vert\cdot\vert v\vert dz_{1}dz_{2}. 
\label{stima5} \eern
The estimates $(\ref{stima4})$ and $(\ref{stima5})$, 
and the trace Sobolev inequalities imply 
$\vert I_{1} \vert \leq C\epsilon \norm {v}$. 
In the same way we can estimate $I_{2}$, 
getting
\be \vert I\vert \leq C\epsilon \norm {v}. \label{stima6}\ee 
Now let's evaluate $II$. 
Using $(\ref{lapl})$ one has
\bern
   II =  \int_{K}\left(-\Delta U_{Q,\epsilon} + U_{Q,\epsilon} - \vert U_{Q,\epsilon}\vert^{p}\right) v dV_{\tilde{g}}\left(z\right) \nonumber\\
           + \epsilon\int_{K}\left[2\left(\gamma''\left(0\right)z_{1}\cdot\nabla_{\left( z_{2}, z_{3}\right) }\frac{\partial  U_{Q,\epsilon}}{\partial z_{1}}\right) + \left(\gamma''\left(0\right)\cdot\nabla_{\left(z_{2}, z_{3}\right)} U_{Q,\epsilon}\right)\right] v dV_{\tilde{g}}\left(z\right) \nonumber\\
  + O\left(\epsilon^{2}\right) \int_{K}\left(\vert z_{1}\vert^{2}\vert\nabla^{2} U_{Q,\epsilon}\vert + \vert z_{1}\vert^{2}\vert\nabla U_{Q,\epsilon}\vert\right) v dV_{\tilde{g}}\left(z\right)  \nonumber\\
     \doteqdot II_{1} + \epsilon II_{2} + O\left(\epsilon^{2}\right)  II_{3}.
\nonumber\eern 
Since $\Delta U_{Q,\epsilon} = U_{Q}\Delta\varphi_{\mu}\left(\epsilon z\right) + 2 \nabla U_{Q}\cdot\nabla\varphi_{\mu}\left(\epsilon z\right) + \varphi_{\mu}\left(\epsilon z\right)\Delta U_{Q}$ and both $\Delta\varphi_{\mu}\left(\epsilon\cdot\right)$ and 
$\nabla\varphi_{\mu}\left(\epsilon\cdot\right)$ are supported in $\R^{3}\setminus B_{\frac{\mu}{4\epsilon}}\left(Q\right)$, 
we get
\bern
     II_{1} =    \int_{\frac{\mu}{4\epsilon}\leq\vert z-Q\vert\leq\frac{\mu}{2\epsilon}} \left(-U_{Q}\Delta\varphi_{\mu}\left(\epsilon z\right) 
                                     - 2 \nabla U_{Q}\cdot\nabla\varphi_{\mu}\left(\epsilon z\right)\right) v\left(1 + O\left(\epsilon \vert z\vert\right)\right)dz 
                                     \nonumber\\
                    +   \int_{\frac{\mu}{4\epsilon}\leq\vert z-Q\vert\leq\frac{\mu}{2\epsilon}}   \left(-\varphi_{\mu}\left(\epsilon z\right)\Delta U_{Q} + 
                                     U_{Q,\epsilon} - \vert U_{Q,\epsilon}\vert^{p}\right) v\left(1 + O\left(\epsilon \vert z\vert\right)\right)dz 
                                     \nonumber\\
                    +    \int_{\vert z-Q\vert\leq\frac{\mu}{4\epsilon}}   \left(-\Delta U_{Q} + 
                                     U_{Q} - \vert U_{Q}\vert^{p}\right) v\left(1 + O\left(\epsilon \vert z\vert\right)\right)dz.      \label{II2}
\eern
Since $U_{Q}$ is a solution in $\R^{3}$ the last term in $(\ref{II2})$ vanishes, 
and using the exponential decay of $U_{Q}$ at infinity and the properties of the cut-off function, see $(\ref{cutoff})$, 
one has
\be        \vert   II_{1}\vert \leq C e^{-\frac{\mu}{4\epsilon}} \int_{K} \vert v\vert dz.   \nonumber\ee
By $(\ref{function})$ we can compute also 
$\nabla_{\left(z_{2}, z_{3}\right) }\frac{\partial  U_{Q,\epsilon}}{\partial z_{1}}$ 
and $\nabla_{\left(z_{2}, z_{3}\right)} U_{Q,\epsilon}$ and we have
\bern
      II_{2} =  \int_{K}  2\gamma''\left(0\right)z_{1}\cdot\left[\nabla_{\left(z_{2}, z_{3}\right)}\frac{\partial\varphi_{\mu}\left(\epsilon z\right)}{\partial z_{1}}U_{Q}  
      +  \nabla_{\left(z_{2}, z_{3}\right)} \varphi_{\mu}\left(\epsilon z\right) \frac{\partial U_{Q}}{\partial z_{1}} \right]     \nonumber\\
      + 2\gamma''\left(0\right)z_{1}\cdot\left[\frac{\partial\varphi_{\mu}\left(\epsilon z\right)}{\partial z_{1}}  \nabla_{\left(z_{2}, z_{3}\right)}  U_{Q}     
      +    \varphi_{\mu}\left(\epsilon z\right) \nabla_{\left(z_{2}, z_{3}\right)}\frac{\partial U_{Q}}{\partial z_{1}}\right] \nonumber\\
      + \gamma''\left(0\right)\cdot\left[\nabla_{\left(z_{2}, z_{3}\right)}\varphi_{\mu}\left(\epsilon z\right) U_{Q} 
      +   \varphi_{\mu}\left(\epsilon z\right)\nabla_{\left(z_{2}, z_{3}\right)}U_{Q}\right]  v dV_{\tilde{g}}\left(z\right)  \nonumber\\
      =  \int_{\frac{\mu}{4\epsilon}\leq\vert z-Q\vert\leq\frac{\mu}{2\epsilon}}  
               2\gamma''\left(0\right)z_{1}\cdot\left[\nabla_{\left(z_{2}, z_{3}\right)}\frac{\partial\varphi_{\mu}\left(\epsilon z\right)}{\partial z_{1}}U_{Q}  
              +  \nabla_{\left(z_{2}, z_{3}\right)} \varphi_{\mu}\left(\epsilon z\right) \frac{\partial U_{Q}}{\partial z_{1}}  
              +  \frac{\partial\varphi_{\mu}\left(\epsilon z\right)}{\partial z_{1}}  \nabla_{\left(z_{2}, z_{3}\right)}  U_{Q}  \right]  \nonumber\\
              +   \gamma''\left(0\right)\cdot\nabla_{\left(z_{2}, z_{3}\right)}\varphi_{\mu}\left(\epsilon z\right) U_{Q}   v dV_{\tilde{g}}\left(z\right)  \nonumber\\
              +  \int_{\vert z-Q\vert\leq\frac{\mu}{2\epsilon}}  
                    \varphi_{\mu}\left(\epsilon z\right) \left[ 2\gamma''\left(0\right)z_{1}\cdot\nabla_{\left(z_{2}, z_{3}\right)}\frac{\partial U_{Q}}{\partial z_{1}} 
                    +  \gamma''\left(0\right)\cdot\nabla_{\left(z_{2}, z_{3}\right)}U_{Q} \right] v  dV_{\tilde{g}}\left(z\right). \nonumber
\eern 
Hence
\bern
   \vert  II_{2} \vert \leq 
                       C \int_{\frac{\mu}{4\epsilon}\leq\vert z-Q\vert\leq\frac{\mu}{2\epsilon}} 
                             \left[ 2\vert\gamma''\left(0\right)\vert\cdot\vert z_{1}\vert \left( \vert U_{Q}\vert +  
                                      \vert \frac{\partial U_{Q}}{\partial z_{1}}\vert  + \vert\nabla_{\left(z_{2}, z_{3}\right)} U_{Q}\vert\right)  
                                  + \vert\gamma''\left(0\right)\vert \cdot\vert U_{Q}\vert\right] \vert v\vert dV_{\tilde{g}}\left(z\right)  \nonumber\\
                       + \int_{\vert z-Q\vert\leq\frac{\mu}{2\epsilon}}  2 \vert \varphi_{\mu}\left(\epsilon z\right)\vert \cdot  \sup_{Q}\vert\gamma''\left(0\right)\vert 
                               \left( \vert z_{1}\vert \cdot\vert\nabla_{\left(z_{2}, z_{3}\right)}\frac{\partial U_{Q}}{\partial z_{1}}\vert 
                                       + \vert \nabla_{\left(z_{2}, z_{3}\right)}U_{Q} \vert\right) \vert v\vert dV_{\tilde{g}}\left(z\right). \nonumber
\eern 
Using again the exponential decay of $U_{Q}$ at infinity 
one can estimate the first term by $C e^{-\frac{\mu}{4\epsilon}} \int_{K} \vert v\vert dz$ 
and conclude that the second term is bounded. 
In the same way we can estimate $II_{3}$, getting
\be     \vert II \vert  \leq C\epsilon \norm {v}. \label{sti}\ee
From   $(\ref{stima6})$ and $(\ref{sti})$ we obtain the conclusion.       
\end{proof}

\medskip

Now, we need a result of non-degeneracy, 
which allows us to say that the operator $I''_{\epsilon}\left(U_{Q,\epsilon}\right)$ 
is invertible on the orthogonal complement of $T_{U_{Q,\epsilon}}Z_{\epsilon}$.

\begin{lemma}
There exists $\bar{\delta}>0$ such that for $\epsilon$ small, if $\alpha<\pi$, there holds
\be  I''_{\epsilon}\left(U_{Q,\epsilon}\right)\left[v, v\right]  \geq \bar{\delta} \norm{v}^{2},   \qquad \mathrm{for\ every\ }      v\in W^{1,2}\left(\Omega_{\epsilon}\right),     v\bot U_{Q, \epsilon}, \frac{\partial U_{Q, \epsilon}}{\partial Q}, \nonumber\ee
and, if $\alpha>\pi$, there holds
\be  I''_{\epsilon}\left(U_{Q,\epsilon}\right)\left[v, v\right]  \geq \bar{\delta} \norm{v}^{2},   \qquad \mathrm{for\ every\ }      v\in W^{1,2}\left(\Omega_{\epsilon}\right),     v\bot U_{Q, \epsilon}, \tilde{U}_{Q, \epsilon} \frac{\partial U_{Q, \epsilon}}{\partial Q}, \nonumber\ee
where $\tilde{U}_{Q, \epsilon}$ is defined as $U_{Q, \epsilon}$ in $(\ref{function})$.
\label{lem:nondeg}\end{lemma}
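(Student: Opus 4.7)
The plan is to argue by contradiction, blowing up at a putative counterexample so as to reduce to the non-degeneracy in the cone, Corollary $\ref{cor:nondeg}$.

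Suppose the conclusion fails. Then there exist sequences $\epsilon_n \to 0$, $Q_n \in \Gamma$ and $v_n \in W^{1,2}(\Omega_{\epsilon_n})$ satisfying the prescribed orthogonality with $\norm{v_n}=1$ and $I''_{\epsilon_n}(U_{Q_n,\epsilon_n})[v_n,v_n] \to 0$. Passing to a subsequence, $\alpha(Q_n) \to \alpha_\infty \in (0,2\pi)\setminus\{\pi\}$ by compactness of $\Gamma$. Translate by $-Q_n$ and apply the coordinates $z$ of Section $2$, which straighten $\Gamma$ to the $z_1$-axis and flatten each of the two faces separately. In these coordinates the rescaled domains converge locally uniformly on compact sets to the cone $K = K_{\alpha_\infty}$, and by $(\ref{gij1})$ the metric $\tilde g$ tends to the Euclidean one. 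Denote by $\tilde v_n$ the transported $v_n$, extended by zero to $\R^3$. Along a further subsequence, $\tilde v_n \rightharpoonup v_\infty$ weakly in $W^{1,2}_{\mathrm{loc}}(K)$ and strongly in $L^2_{\mathrm{loc}}(K)$.

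Next I pass to the limit in the quadratic form. Using the expansion of $\Delta_{\tilde g}$ in $(\ref{lapl})$, of $\tilde\nu$, and of the area element given in Section $2$, every $\epsilon_n$-perturbation term in both the bulk and the boundary contributions is seen to be $o(1)\norm{\tilde v_n}^2$, since the exponential decay of $U$ localises the relevant computation to an $O(1)$ neighbourhood of $Q_n$ where the higher-order terms are uniformly small. Strong $L^2_{\mathrm{loc}}$ convergence and the exponential decay of $U^{p-1}$ convert the potential term $\int U_{Q_n,\epsilon_n}^{p-1}\tilde v_n^{\,2}$ into $\int_K U^{p-1} v_\infty^{\,2}$, while lower semicontinuity of the $W^{1,2}$-seminorm yields $I''_K(U)[v_\infty,v_\infty] \le 0$. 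The orthogonality conditions also pass to the limit, because $U_{Q_n,\epsilon_n}$, $\frac{\partial U_{Q_n,\epsilon_n}}{\partial Q}$, and, when $\alpha_\infty>\pi$, $\tilde U_{Q_n,\epsilon_n}$, converge strongly in $W^{1,2}(K)$ to $U$, $\frac{\partial U}{\partial x_1}$, and $\tilde U$ respectively. Corollary $\ref{cor:nondeg}$ then forces $v_\infty = 0$. To close the contradiction, write
\[
I''_{\epsilon_n}(U_{Q_n,\epsilon_n})[\tilde v_n,\tilde v_n] \;=\; \norm{\tilde v_n}^2 - p\int_{\Omega_{\epsilon_n}} U_{Q_n,\epsilon_n}^{p-1}\tilde v_n^{\,2}\,dV_{\tilde g} + o(1)\norm{\tilde v_n}^2;
\]
since $v_\infty=0$, the integral tends to $0$ by dominated convergence against the exponentially decaying weight, so the left-hand side equals $1+o(1)$, contradicting the assumption that it tends to $0$.

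The delicate point is the second paragraph: one must check that all the metric and boundary perturbations introduced by the two smooth faces meeting along $\Gamma$ actually vanish in the limit, uniformly in $Q_n$. This is handled by using the coordinates $z$ on each face separately, so that the Neumann conditions become flat Neumann on $\partial K$, together with the exponential decay of $U$ that kills the growing polynomial factors $\vert(z_1,z_2)\vert^k$ in $(\ref{lapl})$ and $(\ref{gij1})$. The case $\alpha_\infty>\pi$ requires in addition the exponential decay of $\tilde u$ noted in Remark $\ref{eigenvalues}$, which guarantees the strong convergence of $\tilde U_{Q_n,\epsilon_n}$ needed to pass to the limit in the extra orthogonality constraint.
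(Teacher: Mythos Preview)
Your argument is correct but takes a genuinely different route from the paper. The paper gives a \emph{direct} proof: it fixes a large radius $R$, splits $v=\chi_R(\cdot-Q)v+(1-\chi_R(\cdot-Q))v=:v_1+v_2$ via a cutoff, and treats each piece separately. For the far part $v_2$, the exponential decay of $U$ makes the potential term $p\int U_{Q,\epsilon}^{p-1}v_2^{\,2}$ negligible, so $I''_\epsilon(U_{Q,\epsilon})[v_2,v_2]\gtrsim\|v_2\|^2$; for the near part $v_1$, the coordinate change of Section~2.2 converts $I''_\epsilon(U_{Q,\epsilon})[v_1,v_1]$ into $I''_K(U)[v_1,v_1]$ up to $o_{\epsilon}(1)$ errors, the orthogonality conditions transfer likewise, and Corollary~\ref{cor:nondeg} is applied directly. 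Cross terms are controlled by the same cutoff calculus.

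Your compactness/blow-up argument bypasses the explicit cutoff: you extract a weak limit $v_\infty$ in the cone, force $v_\infty=0$ via Corollary~\ref{cor:nondeg}, and then observe that the potential term $\int U^{p-1}\tilde v_n^{\,2}\to 0$ makes $I''_{\epsilon_n}[\tilde v_n,\tilde v_n]=1+o(1)$, a contradiction. This is softer and shorter; the paper's approach is more quantitative (one can, in principle, track $\bar\delta$ in terms of the $\delta$ of Corollary~\ref{cor:nondeg}) and avoids any discussion of varying limit cones. Two minor points worth tightening: the negation of the lemma yields only $\limsup I''_{\epsilon_n}(U_{Q_n,\epsilon_n})[v_n,v_n]\le 0$ rather than convergence to $0$ (your contradiction still goes through); and since $Q_n$ varies, the straightened domains are cones $K_{\alpha(Q_n)}$ with moving aperture, so to place $v_\infty$ in a fixed $W^{1,2}(K_{\alpha_\infty})$ you should either rescale the angular variable or invoke continuity of the constructions in $\alpha$---and the conclusion $\alpha_\infty\neq\pi$ relies on the contextual assumption (satisfied near the point of Theorem~\ref{th:solution}) that $\alpha$ is bounded away from $\pi$ on the relevant portion of $\Gamma$.
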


\begin{proof} 
Let us consider the case $\alpha<\pi$. 
Let $R\gg 1$; consider a radial smooth function 
$\chi_{R}:\R^{3}\rightarrow\R$ such that
\bern
\left\{ 
\begin{array}{ll} 
          \chi_{R}\left(x\right) =1  \quad & \mathrm{in\ }   B_{R}\left(0\right),     \\
          \chi_{R}\left(x\right) =0  \quad & \mathrm{in\ }   \R^{3}\setminus B_{2R}\left(0\right),     \\
          \vert\nabla\chi_{R}\vert \leq \frac{2}{R}   \quad & \mathrm{in\ }   B_{2R}\left(0\right)\setminus B_{R}\left(0\right),
 \end{array} 
\right.         
 \label{chi}\eern
and set 
 \be      v_{1}\left(x\right) = \chi_{R}\left(x-Q\right)v\left(x\right),    \qquad  
  v_{2}\left(x\right) = \left(1-\chi_{R}\left(x-Q\right)\right)v\left(x\right).    \nonumber\ee
A straight computation yields
\be   \norm{v}^{2}  =    \norm{v_{1}}^{2}  + \norm{v_{2}}^{2}   +    
                    2\int_{\Omega_{\epsilon}}\left( \nabla v_{1}\cdot\nabla v_{2}  +  v_{1} v_{2}\right)dx.
 \nonumber\ee
 We write $\int_{\Omega_{\epsilon}}\left( \nabla v_{1}\cdot\nabla v_{2}  +  v_{1} v_{2}\right)dx=\gamma_{1} + \gamma_{2}$, where
 \bern
      \gamma_{1}  &=&  \int_{\Omega_{\epsilon}} \chi_{R}\left(1-\chi_{R}\right) \left(v^{2}  +  \vert\nabla v\vert^{2}\right)dx,   \nonumber\\
      \gamma_{2}  &=&  \int_{\Omega_{\epsilon}} \left(v_{2}\nabla v\cdot \nabla\chi_{R}   -   v_{1}\nabla v\cdot \nabla\chi_{R}   
                                  - v^{2}\vert\nabla\chi_{R}\vert^{2}\right)dx.
 \nonumber\eern
Since the integrand in $\gamma_{2}$ is supported in $B_{2R}\left(Q\right)\setminus B_{R}\left(Q\right)$, 
using $(\ref{chi})$ and the Young's inequality 
we obtain that $\vert\gamma_{2}\vert=o_{R}\left(1\right)\norm{v}^{2}$. 
As a consequence we have
\be   \norm{v}^{2}  =    \norm{v_{1}}^{2}  + \norm{v_{2}}^{2}     +  2 \gamma_{1}  + o_{R}\left(1\right)\norm{v}^{2}. 
\nonumber\ee
Now let us evaluate $I''_{\epsilon}\left(U_{Q,\epsilon}\right)\left[v, v\right] = \sigma_{1} + \sigma_{2} + \sigma_{3}$, where
\be           \sigma_{1} = I''_{\epsilon}\left(U_{Q,\epsilon}\right)\left[v_{1}, v_{1}\right],  \quad
                 \sigma_{2} = I''_{\epsilon}\left(U_{Q,\epsilon}\right)\left[v_{2}, v_{2}\right],  \quad 
                 \sigma_{3} = 2I''_{\epsilon}\left(U_{Q,\epsilon}\right)\left[v_{1}, v_{2}\right]. 
\nonumber\ee
Similarly to the previous estimates, since $U_{Q}$ decays exponentially away from $Q$, we get
\bern   
       \sigma_{2} &\geq & C^{-1}\norm{v_{2}}^{2} +  o_{\epsilon, R}\left(1\right) \norm{v}^{2},   \nonumber\\
        \sigma_{3} &\geq & C^{-1}\gamma_{1} +  o_{\epsilon, R}\left(1\right) \norm{v}^{2}. 
\label{sigma23}\eern
Hence it is sufficient to estimate the term $\sigma_{1}$. 
From the exponential decay of $U_{Q}$ and the fact that $v\bot U_{Q, \epsilon}, \frac{\partial U_{Q, \epsilon}}{\partial Q}$ it follows that
\bern
    \left(v_{1}, U_{Q, \epsilon}\right)_{W^{1,2}\left(\Omega_{\epsilon}\right)}  &=& 
            - \left(v_{2}, U_{Q, \epsilon}\right)_{W^{1,2}\left(\Omega_{\epsilon}\right)}   =     
            o_{\epsilon, R}\left(1\right) \norm{v}^{2},    \nonumber\\
      \left(v_{1}, \frac{\partial U_{Q, \epsilon}}{\partial Q}\right)_{W^{1,2}\left(\Omega_{\epsilon}\right)}  &=& 
            - \left(v_{2}, \frac{\partial U_{Q, \epsilon}}{\partial Q}\right)_{W^{1,2}\left(\Omega_{\epsilon}\right)}   =     
            o_{\epsilon, R}\left(1\right) \norm{v}^{2}. 
\label{v1v2}\eern
Moreover, since $U_{Q, \epsilon}$ is supported in $B:=B_{\frac{\mu}{2\epsilon}}\left(Q\right)$, see $(\ref{function})$, 
we can use the coordinate $z$ in this set, and we obtain
\bern 
      \left(v_{1}, U_{Q, \epsilon}\right)_{W^{1,2}\left(\Omega_{\epsilon}\right)}  &=& 
         \int_{\partial\Omega_{\epsilon}}   v_{1}\frac{\partial U_{Q,\epsilon}}{\partial\tilde{\nu}} v d\tilde{\sigma} 
    + \int_{\Omega_{\epsilon}}   v_{1}\left(-\Delta_{\tilde{g}} U_{Q,\epsilon} + U_{Q,\epsilon}\right)   dV_{\tilde{g}}\left(z\right)    \nonumber\\
        &=&  \left(v_{1}, U_{Q}\right)_{W^{1,2}\left(K\right)}     +   
        o_{\epsilon}\left(1\right)\norm{v_{1}},  
\label{v11}\eern
where $K=K_{\alpha}$ is the cone of opening angle equal to the angle of $\Gamma$ in $Q$.
In the same way we can obtain that 
\be       \left(v_{1}, \frac{\partial U_{Q, \epsilon}}{\partial Q}\right)_{W^{1,2}\left(\Omega_{\epsilon}\right)}  = 
              \left(v_{1}, \frac{\partial U_{Q}}{\partial Q}\right)_{W^{1,2}\left(K\right)}     +  
               o_{\epsilon}\left(1\right)\norm{v_{1}}. 
\label{v12}\ee
From the estimates $(\ref{v1v2})$, $(\ref{v11})$ and $(\ref{v12})$, 
we deduce that for $R$ sufficiently large and $\epsilon$ sufficiently small 
\bern
            \left(v_{1}, U_{Q}\right)_{W^{1,2}\left(K\right)} &=& o_{\epsilon, R}\left(1\right)\norm{v_{1}}, \nonumber\\
             \left(v_{1}, \frac{\partial U_{Q}}{\partial Q}\right)_{W^{1,2}\left(K\right)}    &=& o_{\epsilon, R}\left(1\right)\norm{v_{1}}. 
\nonumber\eern
Now we can apply Lemma $\ref{lem:cono}$, getting
 \be      I''\left(U_{Q}\right)\left[v_{1}, v_{1}\right] \geq \delta \norm{v_{1}}_{W^{1,2}\left(K\right)}  
            +  o_{\epsilon, R}\left(1\right).  \nonumber\ee 
Then the following estimate holds
\bern    \sigma_{1}  &=&   I''\left(U_{Q}\right)\left[v_{1}, v_{1}\right]   
                                    +   o_{\epsilon}\left(1\right)\norm{v_{1}} 
                                \geq   \delta \norm{v_{1}}_{W^{1,2}\left(K\right)}  
                                    +  o_{\epsilon, R}\left(1\right) \norm{v} \nonumber\\
                                &\geq &  \delta \norm{v_{1}}
                                    +  o_{\epsilon, R}\left(1\right) \norm{v}.
\label{sigma1}\eern
In conclusion, from $(\ref{sigma23})$ and $(\ref{sigma1})$ we deduce
\be       I''_{\epsilon}\left(U_{Q,\epsilon}\right)\left[v, v\right]  \geq  \delta \norm{v}
               +      o_{\epsilon, R}\left(1\right) \norm{v} 
               \geq  \frac{\delta}{2}\norm{v}, 
\nonumber\ee
provided $R$ is taken large and $\epsilon$ sufficiently small. 
This concludes the proof. 

The case $\alpha>\pi$ has substantially the same proof, 
but we have to consider also the function $\tilde{U}$ and 
use the exponential decay of $\tilde{u}$ at infinity, 
see Remark $\ref{eigenvalues}$. 
\end{proof}

\medskip

The following lemma provides an expansion of the functional 
$I_{\epsilon}\left(U_{Q,\epsilon}\right)$ with respect to $Q$.

\begin{lemma}
For $\epsilon$ small the following expansion holds
\be  I_{\epsilon}\left(U_{Q,\epsilon}\right) = C_{0}\alpha\left(Q\right) + O\left(\epsilon\right),  \label{espansione}\ee
where 
\be  C_{0} = \left(\frac{1}{2}-\frac{1}{p+1}\right) \int_{0}^{\infty}\int_{0}^{\pi}\vert U_{Q}\left(r\right)\vert^{p+1} r \sin^{2}\theta dr d\theta. \nonumber\ee
\label{lem:espansione}\end{lemma}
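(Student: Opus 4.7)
The plan is to reduce $I_\epsilon(U_{Q,\epsilon})$ to the unperturbed energy $I_K(U)$ on the model cone $K = K_{\alpha(Q)}$ up to an $O(\epsilon)$ error, and then to evaluate $I_K(U)$ in closed form using a Nehari-type identity together with the cylindrical symmetry of $K$.

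First, I would work in the straightened coordinates of Subsection 2.2, valid on $B_{\mu/\epsilon}(Q)$, which contains the support of $U_{Q,\epsilon}$. In these coordinates the edge $\Gamma$ becomes the $z_1$-axis and the two smooth boundary pieces flatten out, so that $\Omega_\epsilon$ coincides with the cone $K$ up to boundary corrections of size $O(\epsilon |z|^2)$. Using $\det\tilde g = 1$ and the expansions $\tilde g^{ij} = Id - \epsilon A + O(\epsilon^2)$ recorded there, I rewrite
\[
I_\epsilon(U_{Q,\epsilon}) = \frac{1}{2}\int_K\!\left(|\nabla U_{Q,\epsilon}|^2 + U_{Q,\epsilon}^2\right)dz - \frac{1}{p+1}\int_K |U_{Q,\epsilon}|^{p+1}\,dz + R_\epsilon,
\]
where $R_\epsilon$ collects the metric correction and the discrepancy $\Omega_\epsilon \triangle K$.

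Next I would show $R_\epsilon = O(\epsilon)$. The metric correction produces terms of the form $\epsilon \int_K |z_1|\, D(z)\,dz$ with $D$ equal to $|\nabla U_{Q,\epsilon}|^2$ or $U_{Q,\epsilon}^{p+1}$, and the exponential decay of $U$ and $\nabla U$ recalled in Section 3 makes every polynomial moment of these densities finite, so all such terms are $O(\epsilon)$ uniformly in $Q$. The discrepancy $\Omega_\epsilon \triangle K$ has cross-sectional thickness $O(\epsilon|z|^2)$, and the same exponential decay forces its contribution to be $O(\epsilon)$ as well. The truncation introduced by $\varphi_\mu(\epsilon z)$ only alters the integrand on $|z|\gtrsim 1/\epsilon$ and hence contributes $O(e^{-c/\epsilon})$. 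Combining these, $I_\epsilon(U_{Q,\epsilon}) = I_K(U) + O(\epsilon)$.

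Finally, I would evaluate $I_K(U)$ directly. Because $U$ is radial and the two half-planes forming $\partial K$ both contain the $x_1$-axis, the outward normal to $\partial K$ is orthogonal to $\nabla U = U'(r)x/r$ at every boundary point, so $U$ automatically satisfies the Neumann condition on $K$. Multiplying $-\Delta U + U = U^p$ by $U$ and integrating then yields the Nehari identity $\int_K(|\nabla U|^2 + U^2)\,dx = \int_K U^{p+1}\,dx$, whence
\[
I_K(U) = \left(\tfrac{1}{2} - \tfrac{1}{p+1}\right)\int_K U^{p+1}\,dx.
\]
Switching to cylindrical coordinates adapted to the $x_1$-axis, the azimuthal variable ranges over $[0,\alpha(Q)]$ and factors out of the integral, producing exactly $C_0\,\alpha(Q)$. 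The main obstacle is the bookkeeping in the second step: one must verify that every polynomially weighted integral generated by the metric expansion (including the ones involving derivatives of the cutoff that appear in $|\nabla U_{Q,\epsilon}|^2$) is indeed $O(\epsilon)$ uniformly in $Q \in \Gamma$. This rests crucially on the exponential decay of $U$ and $\nabla U$ and on the boundedness of $\varphi_\mu$ and its derivatives.
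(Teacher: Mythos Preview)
Your proposal is correct and follows essentially the same strategy as the paper: pass to the straightened coordinates of Subsection~2.2, use the exponential decay of $U$ to show that the metric and cutoff corrections are $O(\epsilon)$, and then reduce to the Euclidean energy on the cone $K$, which is evaluated via the identity $\int_K(|\nabla U|^2+U^2)=\int_K U^{p+1}$ (the paper obtains this by integrating by parts in the $\tilde g$-metric and estimating the resulting boundary term, whereas you first pass to the flat cone and then invoke $\partial U/\partial\nu=0$ on $\partial K$---a purely organizational difference).
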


\begin{proof}  
Since the function $U_{Q,\epsilon}$ is supported in $B:=B_{\frac{\mu}{2\epsilon}}\left(Q\right)$, see $(\ref{function})$, 
we can use the coordinate $z$ in this set, and we obtain
\be
    I_{\epsilon}\left(U_{Q,\epsilon}\right) = 
           \frac{1}{2}\int_{B\cap\Omega_{\epsilon}}\left(\vert\nabla_{\tilde{g}}U_{Q,\epsilon}\vert^{2} + U_{Q,\epsilon}^{2}\right)dV_{\tilde{g}}\left(z\right)  
           - \frac{1}{p+1}\int_{B\cap\Omega_{\epsilon}} \vert U_{Q,\epsilon}\vert^{p+1} dV_{\tilde{g}}\left(z\right). \nonumber
\ee
Integrating by parts, we get
\bern
    I_{\epsilon}\left(U_{Q,\epsilon}\right) = 
           \frac{1}{2}\int_{B\cap\partial\Omega_{\epsilon}} U_{Q,\epsilon}\frac{\partial U_{Q,\epsilon}}{\partial\tilde{\nu}} d\tilde{\sigma} 
            + \frac{1}{2}\int_{B\cap\Omega_{\epsilon}} U_{Q,\epsilon} \left(-\Delta_{\tilde{g}}U_{Q,\epsilon} + U_{Q,\epsilon}\right)dV_{\tilde{g}}\left(z\right)  \nonumber\\
           - \frac{1}{p+1}\int_{B\cap\Omega_{\epsilon}} \vert U_{Q,\epsilon}\vert^{p+1} dV_{\tilde{g}}\left(z\right) \nonumber\\
           \doteqdot I + II, \nonumber
\eern
where $I$ is the surface integral over the boundary and 
$II$ refers to the last two terms. 
Now, $I$ can be split in two terms 
which correspond to the surface integrals on the "faces" of the edge $\Gamma$:
\be I = \frac{1}{2}\int_{B\cap\partial\Omega_{\epsilon 1}} U_{Q,\epsilon}\frac{\partial U_{Q,\epsilon}}{\partial\tilde{\nu_{1}}} d\tilde{\sigma_{1}} 
          + \frac{1}{2}\int_{B\cap\partial\Omega_{\epsilon 2}} U_{Q,\epsilon}\frac{\partial U_{Q,\epsilon}}{\partial\tilde{\nu_{2}}} d\tilde{\sigma_{2}}  
         \doteqdot I_{1} +   I_{2}. 
\nonumber\ee
It is sufficient to evaluate $I_{1}$, since the estimate of $I_{2}$ is similar. 
Using the expression of $U_{Q, \epsilon}$, 
see $(\ref{function})$, we get
\bern
     I_{1} = \frac{1}{2}\int_{B\cap\partial\Omega_{\epsilon 1}} U_{Q,\epsilon}\left(U_{Q}\nabla\varphi_{\mu}\left(\epsilon z\right)   + 
                     \varphi_{\mu}\left(\epsilon z\right)\nabla  U_{Q}\right) \nonumber\\  
      \cdot \left(\epsilon\left(A_{Q}\left(z_{1}, z_{2}\right)\right) +  
     \epsilon^{2}D_{Q}\left(z_{1}, z_{2}\right), -1 + \frac{3}{2}\epsilon^{2}\vert A_{Q}\left(z_{1}, z_{2}\right)\vert^{2}\right) 
     \left( 1+  O\left(\epsilon^{2}\vert\left(z_{1}, z_{2}\right)\vert^{2}\right)\right) dz_{1}dz_{2}    \nonumber\\
              = \frac{1}{2}\int_{\frac{\mu}{4\epsilon}\leq\vert z-Q\vert\leq\frac{\mu}{2\epsilon}}  
                       \varphi_{\mu}\left(\epsilon z\right) U^{2}_{Q}\nabla\varphi_{\mu}\left(\epsilon z\right) \nonumber\\
                       \cdot            \left(\epsilon\left(A_{Q}\left(z_{1}, z_{2}\right)\right) +  
     \epsilon^{2}D_{Q}\left(z_{1}, z_{2}\right), -1 + \frac{3}{2}\epsilon^{2}\vert A_{Q}\left(z_{1}, z_{2}\right)\vert^{2}\right) 
     \left( 1+  O\left(\epsilon^{2}\vert\left(z_{1}, z_{2}\right)\vert^{2}\right)\right) dz_{1}dz_{2}    \nonumber\\
               + \frac{1}{2}\int_{\vert z-Q\vert\leq\frac{\mu}{2\epsilon}} 
                        \varphi^{2}_{\mu}\left(\epsilon z\right) U_{Q}\nabla U_{Q}      \nonumber\\
                        \cdot                         \left(\epsilon\left(A_{Q}\left(z_{1}, z_{2}\right)\right) +  
     \epsilon^{2}D_{Q}\left(z_{1}, z_{2}\right), -1 + \frac{3}{2}\epsilon^{2}\vert A_{Q}\left(z_{1}, z_{2}\right)\vert^{2}\right) 
     \left( 1+  O\left(\epsilon^{2}\vert\left(z_{1}, z_{2}\right)\vert^{2}\right)\right) dz_{1}dz_{2}.    \nonumber
\eern
Similarly to the previous estimates, we get $I_{1} = O\left( e^{-\frac{\mu}{2\epsilon}}\right)  + O\left(\epsilon\right)$. 
Then we obtain that
\be I = O\left(\epsilon\right).       \label{i}\ee
Now, we have to evaluate $II$: 
\bern 
      II =   \frac{1}{2}\int_{B\cap\Omega_{\epsilon}} U_{Q,\epsilon} 
                     \left(-\Delta U_{Q,\epsilon} + U_{Q,\epsilon}\right)\left(1 + O\left(\epsilon\vert z\vert\right)\right) dz  \nonumber\\
                     +  \frac{\epsilon}{2}\int_{B\cap\Omega_{\epsilon}} U_{Q,\epsilon} 
                          \left[2\gamma''\left(0\right)z_{1}\cdot\nabla_{\left(z_{2}, z_{3}\right)}\frac{\partial U_{Q, \epsilon}}{\partial z_{1}} 
                          +  \gamma''\left(0\right)\cdot\nabla_{\left(z_{2}, z_{3}\right)} U_{Q,\epsilon}\right] 
                          \left(1 + O\left(\epsilon\vert z\vert\right)\right) dz  \nonumber\\
                          + O\left(\epsilon^{2}\vert z_{1}\vert^{2}\right)
           - \frac{1}{p+1}\int_{B\cap\Omega_{\epsilon}} \vert U_{Q,\epsilon}\vert^{p+1} \left(1 + O\left(\epsilon\vert z\vert\right)\right) dz.
\nonumber\eern
We have
\be     II =  \left(\frac{1}{2}-\frac{1}{p+1}\right)\alpha\left(Q\right)  \int_{0}^{\infty}\int_{0}^{\pi}\vert U_{Q}\left(r\right)\vert^{p+1} r \sin^{2}\theta dr d\theta   + O\left( \epsilon\right).  \label{ii}\ee
Putting together $(\ref{i})$ and $(\ref{ii})$, we obtain $(\ref{espansione})$ and this concludes the proof. 
\end{proof}

\bigskip

Let $P_{Q}:W^{1,2}\left(\Omega_{\epsilon}\right) \longrightarrow \left(T_{U_{Q, \epsilon}} Z_{\epsilon}\right)^{\perp}$ 
be the projection onto the orthogonal complement of $T_{U_{Q, \epsilon}} Z_{\epsilon}$, 
for all $Q$ on the edge $\Gamma$ of $\partial\Omega_{\epsilon}$. 
According to the lemma $\ref{lem:nondeg}$, 
we have that for $\epsilon$ sufficiently small the operator 
$L_{Q} = P_{Q}\circ I''_{\epsilon}\left(U_{Q,\epsilon}\right)\circ P_{Q}$ 
is invertible and there exists $C>0$ such that 
\be \norm{L_{Q}^{-1}}\leq C.    \nonumber\ee  
Now, using the fact that $I''_{\epsilon}\left(U_{Q,\epsilon}\right)$ is invertible 
on the orthogonal complement of $T_{U_{Q, \epsilon}} Z_{\epsilon}$, 
we will solve the auxiliary equation.
\begin{proposition} 
Let $I_{\epsilon}$ be the functional defined in $(\ref{var1})$. 
Then for $\epsilon>0$ small there exists a unique $w = w\left(\epsilon, Q\right)\in\left(T_{U_{Q, \epsilon}} Z_{\epsilon}\right)^{\perp}$ 
such that $I'_{\epsilon}\left(U_{Q, \epsilon} + w\left(\epsilon, Q\right)\right)\in T_{U_{Q, \epsilon}} Z_{\epsilon}$. 
Moreover the function $w\left(\epsilon, Q\right)$ is of class $C^{1}$ with respect to $Q$ 
and there holds
\be      \norm{w\left(\epsilon, Q\right)} \leq   C\epsilon,   \qquad 
             \norm{\frac{\partial w\left(\epsilon, Q\right)}{\partial Q}} \leq   C\epsilon.    \label{derivw} \ee
\label{prop:w}\end{proposition}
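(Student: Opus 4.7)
The plan is to carry out a Lyapunov--Schmidt reduction of the auxiliary equation via the Banach contraction principle, exactly in the spirit of Proposition \ref{pr:fi}. I would expand
\[
I'_\epsilon\left(U_{Q,\epsilon} + w\right) = I'_\epsilon\left(U_{Q,\epsilon}\right) + I''_\epsilon\left(U_{Q,\epsilon}\right)[w] + N_\epsilon(Q, w),
\]
where $N_\epsilon(Q, w)$ collects the purely nonlinear remainder coming from the term $-|u|^{p-1}u$. Applying $P_Q$ and noting that $L_Q w = P_Q I''_\epsilon(U_{Q,\epsilon})[w]$ on $(T_{U_{Q,\epsilon}} Z_\epsilon)^\perp$, the auxiliary equation $P_Q I'_\epsilon(U_{Q,\epsilon}+w)=0$ becomes equivalent, by the uniform invertibility of $L_Q$ recalled just before the proposition, to the fixed-point problem
\[
w = F_\epsilon(Q, w) := -L_Q^{-1} P_Q\left[I'_\epsilon(U_{Q,\epsilon}) + N_\epsilon(Q, w)\right].
\]

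I would solve this by contraction on the ball $B_{K\epsilon} = \{w \in (T_{U_{Q,\epsilon}} Z_\epsilon)^\perp : \|w\| \leq K\epsilon\}$, choosing $K$ large. Lemma \ref{lem:pseudo} gives $\|I'_\epsilon(U_{Q,\epsilon})\| \leq C\epsilon$. Since $1<p<5$, the Sobolev embedding $W^{1,2}(\Omega_\epsilon) \hookrightarrow L^{p+1}(\Omega_\epsilon)$, with constants uniform in $\epsilon$ thanks to the Lipschitz cone condition at the edge, yields the superlinear estimate
\[
\|N_\epsilon(Q, w)\| \leq C\left(\|w\|^{\min(p,2)} + \|w\|^p\right),
\]
together with a locally Lipschitz bound of order $\|w_1\|^{p-1} + \|w_2\|^{p-1}$ on $N_\epsilon(Q, w_1) - N_\epsilon(Q, w_2)$. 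Combined with $\|L_Q^{-1}\| \leq C$, these show that $F_\epsilon(Q, \cdot)$ maps $B_{K\epsilon}$ into itself and is a strict contraction for $\epsilon$ small, so the Banach fixed-point theorem produces the unique $w(\epsilon, Q) \in B_{K\epsilon}$ with $\|w(\epsilon, Q)\| \leq C\epsilon$.

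For the $C^1$ dependence on $Q$, I would apply the implicit function theorem to the map $G(\epsilon, Q, w) := P_Q I'_\epsilon(U_{Q,\epsilon}+w)$ at $(\epsilon, Q, w(\epsilon, Q))$: its partial derivative in $w$ equals $L_Q + O(\epsilon)$, uniformly invertible, while its partial in $Q$ is controlled because both $Q \mapsto U_{Q,\epsilon}$ and $Q \mapsto P_Q$ are $C^1$ with uniformly bounded derivatives. Differentiating the identity $G(\epsilon, Q, w(\epsilon, Q)) = 0$ in $Q$ and inverting, one obtains $\partial w/\partial Q$, whose norm is bounded by $C\epsilon$ via an estimate analogous to the one in Lemma \ref{lem:pseudo} applied to $\partial U_{Q,\epsilon}/\partial Q$. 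The main obstacle I anticipate is verifying that all the constants involved (Sobolev embeddings, the operator norms of $P_Q$ and $L_Q^{-1}$, and the bounds on $N_\epsilon$) are uniform in both $\epsilon \to 0$ and $Q$ varying along the non-smooth edge $\Gamma$; once this uniformity is secured with the help of the coordinate descriptions of Subsection 2.2, the argument reduces to a direct application of the abstract perturbative scheme.
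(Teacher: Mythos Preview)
Your proposal is correct and follows essentially the same approach as the paper: the fixed-point argument for the existence of $w$ is identical, and the $C^1$ dependence via the implicit function theorem is the same idea. The only minor technical difference is that the paper, rather than applying the IFT directly to $G(\epsilon,Q,w)=P_QI'_\epsilon(U_{Q,\epsilon}+w)$ with its $Q$-dependent target space, introduces a Lagrange multiplier $\alpha$ and works with the augmented map $H(Q,w,\alpha,\epsilon)=\bigl(I'_\epsilon(U_{Q,\epsilon}+w)-\alpha\,\partial_Q U_{Q,\epsilon},\,(w,\partial_Q U_{Q,\epsilon})\bigr)$ so as to stay in a fixed ambient space; this is a standard reformulation and both routes lead to the same estimates.
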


\begin{proof} 
We want to find a solution $w\in\left(T_{U_{Q, \epsilon}} Z_{\epsilon}\right)^{\perp}$ 
of $P_{Q}I'_{\epsilon}\left(U_{Q, \epsilon} + w\right) = 0$. 
For every $w\in\left(T_{U_{Q, \epsilon}} Z_{\epsilon}\right)^{\perp}$ we can write
\be     I'_{\epsilon}\left(U_{Q, \epsilon} + w\right) =  I'_{\epsilon}\left(U_{Q, \epsilon}\right)  
                       + I''_{\epsilon}\left(U_{Q, \epsilon}\right)\left[w\right]     + R_{Q, \epsilon}\left(w\right),   \nonumber\ee
where $R_{Q, \epsilon}\left(w\right)$ is given by
\be   R_{Q, \epsilon}\left(w\right)   =    I'_{\epsilon}\left(U_{Q, \epsilon} + w\right) -  I'_{\epsilon}\left(U_{Q, \epsilon}\right)  
                       - I''_{\epsilon}\left(U_{Q, \epsilon}\right)\left[w\right].         \nonumber\ee
Given $v\in W^{1,2}\left(\Omega_{\epsilon}\right)$ there holds
\be    R_{Q, \epsilon}\left(w\right)\left[v\right] = - \int_{\Omega_{\epsilon}} \left(\vert U_{Q, \epsilon} + w\vert^{p} 
            - \vert U_{Q, \epsilon}\vert^{p}  -  p\vert U_{Q, \epsilon}\vert^{p-1} w\right) v dx.   \nonumber     \ee
Using the following inequality
\bern 
\vert \left(a + b\right)^{p} - a^{p} - p a^{p-1} b \vert \leq
\left\{ 
\begin{array}{ll} 
         C\left(p\right) \vert b\vert^{p}   \quad & \mathrm{for\ } p\leq 2,     \\
         C\left(p\right) \left(\vert b\vert^{2} + \vert b\vert^{p}\right)   \quad & \mathrm{for\ } p> 2,
 \end{array} 
\right.         
 \nonumber\eern 
for $a, b\in\R$, $\vert a\vert\leq 1$, 
the H\"{o}lder's inequality and the Sobolev embeddings we obtain
\be      \norm{R_{Q, \epsilon}\left(w\right)\left[v\right]} 
                 \leq  C \int_{\Omega_{\epsilon}} \left(\vert w\vert^{2} + \vert w\vert^{p}\right) \vert v\vert dx   
                 \leq  C \left(\norm{w}^{2}+ \norm{w}^{p}\right) \norm{v}.       \label{R1} \ee
Similarly, from the inequality
\bern 
\vert \left(a + b_{1}\right)^{p} - \left(a + b_{2}\right)^{p}  - p a^{p-1} \left(b_{1} -b_{2}\right)  \vert    \nonumber\\
\leq
\left\{ 
\begin{array}{ll} 
         C\left(p\right) \left(\vert b_{1}\vert^{p-1} + \vert b_{2}\vert^{p-1}\right) \vert b_{1} -  b_{2}\vert   \quad & \mathrm{for\ } p\leq 2,      \\
         C\left(p\right) \left(\vert b_{1}\vert + \vert b_{2}\vert  +  \vert b_{1}\vert^{p-1} + \vert b_{2}\vert^{p-1}\right) \vert b_{1} -  b_{2}\vert \quad & \mathrm{for\ } p> 2,
 \end{array} 
\right.         
 \nonumber\eern 
for $a, b_{1}, b_{2}\in\R$, $\vert a\vert\leq 1$, 
we get 
\bern      \norm{R_{Q, \epsilon}\left(w_{1}\right)\left[v\right] -  R_{Q, \epsilon}\left(w_{2}\right)\left[v\right]}  
  \leq   C \int_{\Omega_{\epsilon}} \left(\vert w_{1}\vert  +  \vert w_{2}\vert  +  \vert w_{1}\vert^{p-1} + \vert w_{2}\vert^{p-1}\right) 
                 \vert w_{1} - w_{2}\vert \cdot\vert v\vert dx   \nonumber\\
          \leq         C \left(\norm{w_{1}}  +   
                 \norm{w_{2}}   +   \norm{w_{1}}^{p-1}   + 
                 \norm{w_{2}}^{p-1}\right)       
                 \norm{w_{1} - w_{2}} \cdot 
                 \norm{v}.       
\label{R2} \eern
Now, by the invertibility of the operator $L_{Q} = P_{Q}\circ I''_{\epsilon}\left(U_{Q,\epsilon}\right)\circ P_{Q}$, 
we have that the function $w$ solves $P_{Q}I'_{\epsilon}\left(U_{Q, \epsilon} + w\right) = 0$ if and only if 
\be     w = -  \left(L_{Q}\right)^{-1} \left[P_{Q}I'_{\epsilon}\left(U_{Q, \epsilon}\right)  + P_{Q}R_{Q, \epsilon}\left(w\right)\right]. \nonumber\ee
Setting  
\be       N_{Q, \epsilon}\left(w\right) =  
          -  \left(L_{Q}\right)^{-1} \left[P_{Q}I'_{\epsilon}\left(U_{Q, \epsilon}\right)  + P_{Q}R_{Q, \epsilon}\left(w\right)\right], \nonumber\ee
we have to solve
\be     w = N_{Q, \epsilon}\left(w\right).    \nonumber\ee
The norm of $I'_{\epsilon}\left(U_{Q, \epsilon}\right)$ has been estimated in Lemma $\ref{lem:pseudo}$. 
Then from $(\ref{R1})$ and $(\ref{R2})$ we obtain the two relations
\bern
      \norm{N_{Q, \epsilon}\left(w\right)}  &\leq &     C_{1}\epsilon  +  C_{2}\left(\norm{w}^{2}+ \norm{w}^{p}\right),      \label{N1}  \\
      \norm{N_{Q, \epsilon}\left(w_{1}\right)    -   N_{Q, \epsilon}\left(w_{2}\right)}
      &\leq &      
     C \left(\norm{w_{1}} + \norm{w_{2}}   +   
      \norm{w_{1}}^{p-1}  + \norm{w_{2}}^{p-1}\right) 
      \norm{w_{1} - w_{2}}.    \label{N2}
\eern
Now, for $\bar{C}>0$, we define the set
\be       W_{\bar{C}} = \left\lbrace w\in\left(T_{U_{Q, \epsilon}} Z_{\epsilon}\right)^{\perp} : 
                                    \norm{w} \leq \bar{C}\epsilon \right\rbrace. \nonumber\ee
We show that $N_{Q, \epsilon}$ is a contraction in $W_{\bar{C}}$ 
for $\bar{C}$ sufficiently large and for $\epsilon$ small. 
Clearly, by $(\ref{N1})$, if $\bar{C}>2C_{1}$ the set $W_{\bar{C}}$ 
is mapped into itself if $\epsilon$ is sufficiently small. 
Then, if $w_{1}, w_{2}\in W_{\bar{C}}$, by $(\ref{N2})$ there holds
\be   \norm{N_{Q, \epsilon}\left(w_{1}\right)    -   N_{Q, \epsilon}\left(w_{2}\right)}  
            \leq  2C\left(\bar{C}\epsilon   +    \bar{C}^{p-1}\epsilon^{p-1}\right)  \norm{w_{1} - w_{2}}.  \nonumber    \ee
Therefore, again if $\epsilon$ is sufficiently small, the coefficient of 
$\norm{w_{1} - w_{2}}$ 
in the last formula is less than $1$. 
Hence the Contraction Mapping Theorem applies, 
yielding the existence of a solution $w$ satisfying the condition 
\be    \norm{w}\leq \bar{C}\epsilon.   \label{stimaw}\ee
This concludes the proof of the existence part. 

Now the $C^{1}$-dependence of the function $w$ on $Q$ 
follows from the Implicit Function Theorem; 
see also \cite{AM}, Proposition $8.7$. 
In order to prove the second estimate in $(\ref{derivw})$, let us consider the map 
$H : \R^{3}\times W^{1,2}\left(\Omega_{\epsilon}\right)\times\R\times\R\longrightarrow W^{1,2}\left(\Omega_{\epsilon}\right)\times\R$ 
defined by
\bern H\left(Q, w, \alpha, \epsilon\right)  =
\left( 
\begin{array}{cc} 
      I'_{\epsilon}\left(U_{Q, \epsilon} + w\right) - \alpha\frac{\partial U_{Q, \epsilon}}{\partial Q} \\ 
      \left(w, \frac{\partial U_{Q, \epsilon}}{\partial Q}\right)
\end{array} 
\right).
\nonumber\eern
Then $w\in\left(T_{U_{Q, \epsilon}} Z_{\epsilon}\right)^{\perp}$ is a solution 
of $P_{Q}I'_{\epsilon}\left(U_{Q, \epsilon} + w\right) = 0$ 
if and only if $H\left(Q, w, \alpha, \epsilon\right) = 0$. 
Moreover, for $v\in W^{1,2}\left(\Omega_{\epsilon}\right)$ and $\beta\in\R$, 
there holds
\bern \frac{\partial H}{\partial\left(w, \alpha\right)}\left(Q, w, \alpha, \epsilon\right)\left[v, \beta\right]   &=&
\left( 
\begin{array}{cc} 
      I''_{\epsilon}\left(U_{Q, \epsilon} + w\right)\left[v\right]  - \beta\frac{\partial U_{Q, \epsilon}}{\partial Q} \\ 
      \left(v, \frac{\partial U_{Q, \epsilon}}{\partial Q}\right)
\end{array} 
\right)   \label{stimaH}\\
&=& \left( 
\begin{array}{cc} 
      I''_{\epsilon}\left(U_{Q, \epsilon}\right)\left[v\right]  - \beta\frac{\partial U_{Q, \epsilon}}{\partial Q} \\ 
      \left(v, \frac{\partial U_{Q, \epsilon}}{\partial Q}\right)
\end{array} 
\right)  +    O\left(\norm{w}  + \norm{w}^{p-1}\right).
\nonumber\eern
To prove the last estimate it is sufficient to use the following inequality
\bern 
\vert \left(a + b\right)^{p-1} - a^{p-1} \vert \leq
\left\{ 
\begin{array}{ll} 
         C\left(p\right) \vert b\vert^{p-1}   \quad & \mathrm{for\ } p\leq 2,      \\
         C\left(p\right) \left(\vert b\vert + \vert b\vert^{p-1}\right)   \quad & \mathrm{for\ } p> 2,
 \end{array} 
\right.         
 \nonumber\eern  
for $a, b\in\R$, $\vert a\vert\leq 1$, 
the H\"{o}lder's inequality and the Sobolev embedding. 
Using the invertibility of the operator 
$L_{Q} = P_{Q}\circ I''_{\epsilon}\left(U_{Q,\epsilon}\right)\circ P_{Q}$, 
it is easy to check that $\frac{\partial H}{\partial\left(w, \alpha\right)}\left(Q, 0, 0, \epsilon\right)$ 
is uniformly invertible in $Q$ for $\epsilon$ small. 
Hence, by $(\ref{stimaw})$ and $(\ref{stimaH})$, 
also $\frac{\partial H}{\partial\left(w, \alpha\right)}\left(Q, w, \alpha, \epsilon\right)$ 
is uniformly invertible in $Q$ for $\epsilon$ small. 
As a consequence, by the Implicit Function Theorem, 
the map $Q\mapsto\left(w_{Q}, \alpha_{Q}\right)$ is of class $C^{1}$. 
Now we are in position to provide the norm estimate of $\frac{\partial w\left(\epsilon, Q\right)}{\partial Q}$. 
Differentiating the equation
\be     H\left(Q, w_{Q}, \alpha_{Q}, \epsilon\right) = 0  \nonumber\ee
with respect to $Q$, we obtain
\be   0 = \frac{\partial H}{\partial Q}\left(Q, w, \alpha, \epsilon\right) 
                 + \frac{\partial H}{\partial\left(w, \alpha\right)}\left(Q, w, \alpha, \epsilon\right) \frac{\partial\left(w_{Q}, \alpha_{Q}\right)}{\partial Q}. 
\nonumber\ee
Hence, by the uniform invertibility of $\frac{\partial H}{\partial\left(w, \alpha\right)}\left(Q, w, \alpha, \epsilon\right)$ 
it follows that
\bern  
    \norm{\frac{\partial\left(w_{Q}, \alpha_{Q}\right)}{\partial Q}} &\leq & 
            C \norm{\left( 
\begin{array}{cc} 
      I''_{\epsilon}\left(U_{Q, \epsilon} + w\right)\left[\frac{\partial U_{Q, \epsilon}}{\partial Q}\right]  - \alpha\frac{\partial^{2} U_{Q, \epsilon}}{\partial Q^{2}} \\ 
      \left(w, \frac{\partial^{2} U_{Q, \epsilon}}{\partial Q^{2}}\right) 
\end{array} 
\right)}\nonumber\\
       &\leq & C\left( \norm{I''_{\epsilon}\left(U_{Q, \epsilon} + w\right)\left[\frac{\partial U_{Q, \epsilon}}{\partial Q}\right]}
               +     \vert\alpha\vert\cdot\norm{\frac{\partial^{2}U_{Q, \epsilon}}{\partial Q^{2}}}   
               +     \norm{w} \cdot \norm{\frac{\partial^{2}U_{Q, \epsilon}}{\partial Q^{2}}}\right)   \nonumber\\
        &\leq &    C\left( \norm{I''_{\epsilon}\left(U_{Q, \epsilon} + w\right)\left[\frac{\partial U_{Q, \epsilon}}{\partial Q}\right]}
               +     \vert\alpha\vert + \norm{w} + \epsilon\right).     
\nonumber\eern
Note that $\alpha$, similarly to $w$, satisfies $\vert\alpha\vert\leq C \epsilon$. 
By the estimate in $(\ref{stimaH})$ we obtain
\bern
     \norm{I''_{\epsilon}\left(U_{Q, \epsilon} + w\right)\left[\frac{\partial U_{Q, \epsilon}}{\partial Q}\right]}
         \leq  \norm{I''_{\epsilon}\left(U_{Q, \epsilon}\right)\left[\frac{\partial U_{Q, \epsilon}}{\partial Q}\right]}  
                 +   C\left(\norm{w}  + \norm{w}^{p-1}\right). 
\nonumber\eern
Using the fact that $I''\left(U_{Q}\right)\left[\frac{\partial U_{Q}}{\partial z_{1}}\right]=0$ we obtain
\bern 
       \norm{I''_{\epsilon}\left(U_{Q, \epsilon} + w\right)\left[\frac{\partial U_{Q, \epsilon}}{\partial Q}\right]}   
             \leq   \norm{I''_{\epsilon}\left(U_{Q, \epsilon}\right)\left[\frac{\partial U_{Q}}{\partial z_{1}}\right]  -  
                                   I''\left(U_{Q}\right)\left[\frac{\partial U_{Q}}{\partial z_{1}}\right]}  + C\epsilon 
                 +   C\left(\norm{w}  + \norm{w}^{p-1}\right).     \nonumber\eern
For any $v\in W^{1,2}\left(K\right)$, one finds
\bern      
      \vert   \left(I''_{\epsilon}\left(U_{Q, \epsilon}\right)   -   I''\left(U_{Q}\right)\right) \left[\frac{\partial U_{Q}}{\partial z_{1}}, v\right]  \vert 
                 \leq     p  \int_{K\cap\Omega_{\epsilon}}  \vert  U_{Q, \epsilon} - U_{Q} \vert  \frac{\partial U_{Q}}{\partial z_{1}}  v    
                  +C\epsilon.
\nonumber\eern
The last three formulas implies the estimate for $\frac{\partial w\left(\epsilon, Q\right)}{\partial Q}$. 
This concludes the proof.
\end{proof}

\medskip

Now we can state the following result, which allows us to perform a finite-dimensional 
reduction of problem $(\ref{problem1})$ on the manifold $Z_{\epsilon}$.
\begin{proposition} 
The functional $\Psi_{\epsilon}:Z_{\epsilon}\rightarrow\R$ defined by 
$\Psi_{\epsilon}\left(Q\right) = I_{\epsilon}\left(U_{Q, \epsilon} + w\left(\epsilon, Q\right)\right)$ is of class $C^{1}$ in $Q$ 
and satisfies 
\be        \Psi '_{\epsilon}\left(Q\right) = 0  \qquad \Longrightarrow  \qquad    I'_{\epsilon}\left(U_{Q, \epsilon} + w\left(\epsilon, Q\right)\right) = 0. \nonumber\ee
\label{prop:ridotto}\end{proposition}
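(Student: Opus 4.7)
The plan is to exploit the natural constraint structure that has been built in: by the defining property of $w(\epsilon,Q)$ from Proposition \ref{prop:w}, the gradient $I_\epsilon'(U_{Q,\epsilon}+w(\epsilon,Q))$ lies in the one-dimensional tangent space $T_{U_{Q,\epsilon}}Z_\epsilon$, so it can be written as $\beta(Q)\,\frac{\partial U_{Q,\epsilon}}{\partial Q}$ for some scalar $\beta(Q)$. The goal is to show that $\Psi_\epsilon'(Q)=0$ forces $\beta(Q)=0$.

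First I would verify that $\Psi_\epsilon$ is $C^1$: this follows immediately from the $C^1$-regularity of $Q\mapsto w(\epsilon,Q)$ established in Proposition \ref{prop:w}, combined with the $C^2$ smoothness of $I_\epsilon$ and the smoothness of the parametrization $Q\mapsto U_{Q,\epsilon}$. Next, by the chain rule,
\be
\Psi_\epsilon'(Q)\;=\;\Big(I_\epsilon'(U_{Q,\epsilon}+w)\,,\,\tfrac{\partial U_{Q,\epsilon}}{\partial Q}+\tfrac{\partial w}{\partial Q}\Big).
\nonumber\ee
Substituting $I_\epsilon'(U_{Q,\epsilon}+w)=\beta(Q)\,\frac{\partial U_{Q,\epsilon}}{\partial Q}$ gives
\be
\Psi_\epsilon'(Q)\;=\;\beta(Q)\Big[\,\big\|\tfrac{\partial U_{Q,\epsilon}}{\partial Q}\big\|^{2}+\big(\tfrac{\partial U_{Q,\epsilon}}{\partial Q},\tfrac{\partial w}{\partial Q}\big)\Big].
\nonumber\ee

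The key step is to show the bracket is bounded below by a positive constant for $\epsilon$ small. For the first summand, a direct computation (using the expression \eqref{function} and the exponential decay of $U$) shows that $\|\frac{\partial U_{Q,\epsilon}}{\partial Q}\|^{2}\to\|\frac{\partial U}{\partial x_1}\|_{W^{1,2}(K)}^{2}>0$ as $\epsilon\to 0$, uniformly in $Q\in\Gamma$. For the cross term, I would use the orthogonality $w(\epsilon,Q)\in(T_{U_{Q,\epsilon}}Z_\epsilon)^{\perp}$, that is $(w,\frac{\partial U_{Q,\epsilon}}{\partial Q})=0$. Differentiating this identity in $Q$ yields
\be
\big(\tfrac{\partial w}{\partial Q},\tfrac{\partial U_{Q,\epsilon}}{\partial Q}\big)\;=\;-\big(w,\tfrac{\partial^{2}U_{Q,\epsilon}}{\partial Q^{2}}\big),
\nonumber\ee
and the right-hand side is $O(\epsilon)$ by the Cauchy--Schwarz inequality combined with the estimate $\|w\|=O(\epsilon)$ from Proposition \ref{prop:w} and the uniform boundedness of $\|\frac{\partial^{2}U_{Q,\epsilon}}{\partial Q^{2}}\|$.

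Putting these estimates together, the bracket equals $\|\frac{\partial U}{\partial x_1}\|_{W^{1,2}(K)}^{2}+o(1)$ as $\epsilon\to 0$, hence is strictly positive for $\epsilon$ sufficiently small and uniformly in $Q$. Thus $\Psi_\epsilon'(Q)=0$ implies $\beta(Q)=0$, which is the desired conclusion $I_\epsilon'(U_{Q,\epsilon}+w(\epsilon,Q))=0$. The main (though mild) obstacle is the cross term analysis, which depends crucially on the estimate $\|\frac{\partial w}{\partial Q}\|=O(\epsilon)$ from Proposition \ref{prop:w}; the orthogonality trick above is cleaner than directly invoking that bound, and avoids any additional work on the $Q$-derivative of $w$.
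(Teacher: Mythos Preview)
Your proof is correct and is essentially the same argument as the paper's, just presented computationally rather than geometrically. The paper phrases the conclusion as ``$T_{U_{Q,\epsilon}}Z_\epsilon\sim T_{U_{Q,\epsilon}+w}\tilde{Z}_\epsilon$ for $\epsilon$ small, and $I_\epsilon'(U_{Q,\epsilon}+w)$ is both in the first and (by constrained criticality) orthogonal to the second, hence zero,'' whereas you unpack this into the explicit identity $\Psi_\epsilon'(Q)=\beta(Q)\big(\tfrac{\partial U_{Q,\epsilon}}{\partial Q},\tfrac{\partial U_{Q,\epsilon}}{\partial Q}+\tfrac{\partial w}{\partial Q}\big)$ and show the pairing is bounded below; these are the same statement, since that pairing is precisely the inner product between generators of the two tangent lines. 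Your trick of differentiating the orthogonality $(w,\tfrac{\partial U_{Q,\epsilon}}{\partial Q})=0$ to control the cross term is a clean alternative to invoking the bound on $\|\tfrac{\partial w}{\partial Q}\|$ from Proposition~\ref{prop:w} directly (which is what the paper's reference to \eqref{derivw} amounts to), but either route works and both rely on the same ingredients.
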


\begin{proof} 
This proposition can be proved using the arguments of 
Theorem $2.12$ of \cite{AM}. 
From a geometric point of view, we consider the manifold 
\be \tilde{Z}_{\epsilon} = \left\lbrace U_{Q, \epsilon} + w\left(\epsilon, Q\right) : Q\in\Gamma\right\rbrace.   \nonumber \ee 
Since $(\ref{derivw})$ holds, we have that for $\epsilon$ small 
\be    T_{U_{Q, \epsilon}} Z_{\epsilon}  \sim T_{U_{Q, \epsilon} + w\left(\epsilon, Q\right)}\tilde{Z}_{\epsilon}.  \label{tang}\ee
If $U_{Q, \epsilon} + w\left(\epsilon, Q\right)$ is a critical point of $I_{\epsilon}$ 
constrained on $\tilde{Z}_{\epsilon}$, then 
$I'_{\epsilon}\left(U_{Q, \epsilon} + w\left(\epsilon, Q\right)\right)$ is perpendicular 
to $T_{U_{Q, \epsilon} + w\left(\epsilon, Q\right)}\tilde{Z}_{\epsilon}$, 
and hence, from $(\ref{tang})$, is almost perpendicular to $T_{U_{Q, \epsilon}} Z_{\epsilon}$. 
Since, by construction of $\tilde{Z}_{\epsilon}$, it is 
$I'_{\epsilon}\left(U_{Q, \epsilon} + w\left(\epsilon, Q\right)\right)\in T_{U_{Q, \epsilon}} Z_{\epsilon}$, 
it must be $I'_{\epsilon}\left(U_{Q, \epsilon} + w\left(\epsilon, Q\right)\right)=0$. 
This concludes the proof.
\end{proof}

\subsection{Proof of Theorem $\ref{th:solution}$}
First of all we have 
\bern     \Psi_{\epsilon}\left(Q\right) &=& 
        I_{\epsilon}\left(U_{Q, \epsilon} + w\left(\epsilon, Q\right)\right)  \nonumber\\
            &=&  I_{\epsilon}\left(U_{Q, \epsilon}\right) +  
              I'_{\epsilon}\left(U_{Q, \epsilon}\right)\left[w\left(\epsilon, Q\right)\right]   + O\left(\norm{w\left(\epsilon, Q\right)}^{2}\right). 
\nonumber\eern
Now, using Lemma $\ref{lem:pseudo}$ and the estimate $(\ref{derivw})$ we infer
\be      \Psi_{\epsilon}\left(Q\right) =    I_{\epsilon}\left(U_{Q, \epsilon}\right)  +  O\left(\epsilon^{2}\right).   \nonumber\ee
Hence Lemma $\ref{lem:espansione}$ yields 
\be     \Psi_{\epsilon}\left(Q\right) = C_{0}\alpha\left(Q\right) + O\left(\epsilon\right).   \nonumber\ee
Therefore, if $Q\in\Gamma$ is a local strict maximum or minimum of the function $\alpha$, 
the thesis follows from Proposition $\ref{prop:ridotto}$.

\begin{center}

{\bf Acknowledgements}

\end{center}

\noindent The author has been supported by the project FIRB-Ideas {\em
Analysis and Beyond}, and wants to thank \textit{Andrea Malchiodi} for his great help in the preparation of this paper.

\end{document}